\title{A survey of Galois theory of curves in characteristic $p$}
\author{Rachel Pries and Katherine Stevenson}
\date{\today}
\newtheorem{theorem}{Theorem}[section]
\newtheorem{lemma}[theorem]{Lemma}
\newtheorem{proposition}[theorem]{Proposition}
\newtheorem{corollary}[theorem]{Corollary}
\newtheorem{predefinition}[theorem]{Definition}
\newtheorem{preremark}[theorem]{Remark}
\newtheorem{prenotation}[theorem]{Notation}
\newtheorem{preexample}[theorem]{Example}
\newenvironment{example}{\begin{preexample}\rm}{\end{preexample}}
\newtheorem{preclaim}[theorem]{Claim}
\newtheorem{prequestion}[theorem]{Question}
\newenvironment{question}{\begin{prequestion}\rm}{\end{prequestion}}
\def\emppsubsection{\@startsection{subsection}{2}{\z@}{-3.25ex plus -1ex minus -.2ex}{-1em}{\bf}}
\newcommand \X {{\mathcal X}}
\newcommand \Y {{\mathcal Y}}
\newcommand  \B {{\mathcal B}}
\newcommand \CL {{\mathcal L}}
\newcommand \CO {{\mathcal O}}
\newcommand \PP {{\mathbb P}^1}
\newcommand \ZZ {{\mathbb Z}}
\newcommand  \FF {{\mathbb F}}
\newcommand \CC {{\mathbb C}}
\newcommand \GG {{\mathbb G}}
\newcommand  \QQ {{\mathbb Q}}
\newcommand  \NN {{\mathbb N}}
\newcommand  \Aa {{\mathbb A}^1}
\newcommand \Aut {\mathop{\rm Aut}}
\newcommand \Spec {\mathop{\rm Spec}}
\newcommand \Gal {\mathop{\rm Gal}}
\newcommand \dv  {\mathop{\rm div}}
\newcommand{\Ga}{({\mathbb Z}/\ell {\mathbb Z})^a \rtimes {\mathbb Z}/p {\mathbb Z}}
\begin{document}
\maketitle

\begin{abstract}
This survey is about Galois theory of curves in characteristic $p$, a topic which
has inspired major research in algebraic geometry and number theory and which contains many open questions.
We illustrate important phenomena which occur for covers of curves in characteristic $p$.
We explain key results on the structure of fundamental groups.
We end by describing areas of active research and giving two new results about the 
genus and $p$-rank of certain covers of the affine line. 
\end{abstract}

\subjclass{14H30, 14H40, 11G20}

\thanks{This project was initiated at the workshop WIN Women in Numbers in November 2008.  The authors would like to thank the Banff International Research Station for hosting the workshop and the National Security Agency, the Fields Institute, the Pacific Institute for the Mathematical Sciences, Microsoft Research, and University of Calgary for their financial support.  The first author was partially supported by NSF grant 07-01303.}

\section{Introduction}
The purpose of this paper is to introduce the reader to the topic of Galois theory of curves in characteristic $p$.
Since topological methods are no longer applicable, this topic has inspired major research in algebraic geometry and number theory, to recapture some information on the structure of fundamental groups.  In spite of these advances,
there are still many fascinating open questions on this topic.   

In section \ref{Sfalse}, we recall some Galois theoretic facts for complex curves which are meaningless or false for curves in characteristic $p$.
Section \ref{Sexample} contains some crucial examples of Kummer and Artin-Schreier covers of curves in characteristic $p$.
The main algebraic definitions of objects such as the fundamental group, higher ramification groups, and Jacobians,
can be found in Section \ref{Sdefine}.
In Section \ref{Stheory}, we outline the proofs of several major results, including some of 
the contributions of Grothendieck, Harbater, Pop, Raynaud, Serre, and Tamagawa.

Finally, in Section \ref{Sopen}, we describe a few areas of active research 
involving embedding problems and arithmetic invariants of Galois covers defined over 
an algebraically closed field of characteristic $p>0$.
We prove two new results on these topics.
To describe the results, let $\ell$ be a prime distinct from $p$ and let $a$ be the order of $\ell$ modulo $p$.
Let $L$ be an $\ell$-group whose maximal elementary abelian quotient is $(\ZZ/\ell)^a$.
Let $G$ be a semi-direct product $L \rtimes \ZZ/p$.
In Proposition \ref{Pgenus}, we prove that the smallest genus which occurs for a 
(wildly ramified) $G$-Galois cover $\phi:W \to \PP_k$ branched only at $\infty$ is $g_W=1+|L|(p-3)/2$.
This result can be viewed as the solution to an embedding problem with prescribed ramification conditions.
In Proposition \ref{Pprank}, when $L \simeq (\ZZ/\ell)^a$, we prove that $W$ can be chosen such that 
its Jacobian $J_W$ has $p$-rank $s_W=(\ell^a-1)(p-3)/2$ and furthermore such that 
the $p$-torsion $J_W[p]$ decomposes completely into $s_W$ copies of $\ZZ/p \oplus \mu_p$ and 
$(p-1)/2$ copies of $E_{ss}[p]$, the $p$-torsion group scheme of a supersingular elliptic curve.
In particular, the Newton polygon of $J_W$ only has slopes $0$, $1/2$, and $1$.
The result is interesting because this combination of arithmetic invariants is somewhat unusual.

\section{Facts about Galois covers of complex curves} \label{Sfalse}
Here are some of the basic properties of Galois covers of complex curves that are false for covers of curves 
defined over a field of characteristic $p>0$. 
Suppose $\X$ is a smooth connected projective complex curve, i.e., a Riemann surface, of genus $g$.
Suppose $\B \subset \X$ is a finite set of $r \geq 0$ points and $x \in \X-\B$ is a point.
Suppose $G$ is a finite group.

\subsection{The fundamental group $\pi_1(\X-\B, x)$} \label{profinitediff}

The complex curve $\X$ is homeomorphic to the quotient of a polygon with $4g$ sides,
where the quotient is determined by identifying the sides with the consecutive labels  
$\alpha_i$, $\beta_i$, $\alpha_i^{-1}$, $\beta_i^{-1}$ for $1 \leq i \leq g$. 
Also the point $x$ can be identified with a corner of the polygon.
Let $\gamma_i$ be a loop in $\X$, starting at $x$, that circles around the $i$th point of $\B$.
The topological fundamental group $\pi_1(\X-\B, x)$ is generated by the homotopy classes of the loops 
$\alpha_1, \beta_1, \ldots, \alpha_g,\beta_g, \gamma_1, \ldots, \gamma_r$ with the sole 
relation $\prod_{i=1}^g [\alpha_i,\beta_i]\prod_{j=1}^r \gamma_j = 1$.  
This statement about the fundamental group implies the following facts:
\begin{enumerate}
\item[(i)] If $r > 0$, then $\pi_1(\X - \B, x)$ is a free group on $2g+r-1$ generators.
\item[(ii)] The structure of $\pi_1(\X - \B, x)$ depends only on the genus of $\X$ and the 
cardinality of $\B$.
\end{enumerate}

Because there is a bijection between finite quotients of $\pi_1(\X-\B,x)$ and finite Galois covers of $\X$ 
with branch locus in $\B$, one immediately deduces the following:
\begin{enumerate}
\item[(iii)] A finite group $G$ is the Galois group of a cover of $\X$ with branch locus in $\B$ 
if and only if $G$ can be generated by $2g+r-1$ elements.
\item[(iv)] In particular, there are no nontrivial Galois covers of the complex affine line $\Aa_{\CC}$
(i.e., the complex plane is simply connected).
\item[(v)] Given $\X$, $\B$, and $G$, the number of isomorphism classes of Galois covers 
of $\X$ with branch locus in $\B$ and with Galois group $G$ is finite.
\end{enumerate}

\subsection{Ramification of complex covers}\label{Sramcx}

Suppose $\varphi:\Y \to \X$ is a Galois cover of complex curves with branch locus in $\B$
and Galois group $G$.
Consider a point $Q \in \varphi^{-1}(\B)$.
The decomposition group $D_Q$ consists of the automorphisms $\sigma \in {\rm Gal}(\Y/\X)$ such that $\sigma(Q)=Q$. 
The image under $\varphi$ of a loop in $\Y$ around $Q$ will be a loop in $\X$ around $\varphi(Q)$ 
traversed $|D_Q|$ times.
By triangulating $\X$ and $\Y$ appropriately and computing their Euler characteristics, 
one can determine the genus of $\Y$.  This yields some more facts:
\begin{enumerate}
\item[(vi)] The decomposition groups of a Galois cover $\varphi:\Y \to \X$ of complex curves are cyclic.
\item[(vii)] If $g_\X$ is the genus of $\X$, the genus $g_\Y$ of $\Y$ is given by the Riemann-Hurwitz formula to be
$$2g_\Y - 2 = |G|(2g_\X -2) + \sum_{Q \in \varphi^{-1}(\B)} (|D_Q|-1).$$
Thus, $g_\Y$ is determined by $g_\X$, $|G|$, $|\B|$ and the 
orders of the decomposition groups.  
\end{enumerate}

\subsection{Jacobians and torsion points}

The definition of the Jacobian $J_{\X}$ of a complex curve $\X$ can be found in \cite[VIII]{Miranda}.
Recall that $\Omega^1$ is the vector space of holomorphic $1$-forms $\omega$ on $\X$.  
If $\gamma$ is a loop in $\X$, there is a linear functional $\int_\gamma: \Omega^1 \to \CC$.
The value of the integral $\int_\gamma \omega$ depends only on the equivalence class $[\gamma]$ of $\gamma$
in the homology group $H_1(\X, \ZZ)$, which is the abelianization of the fundamental group.
The dual space $(\Omega^1)^*$ is the vector space of linear functionals $\lambda:\Omega^1 \to \CC$.
A {\it period} is a linear functional which equals $\int_{[\gamma]}$ 
for some equivalence class $[\gamma]$ in $H_1(\X, \ZZ)$.
The set $\Lambda$ of periods is a subgroup of $(\Omega^1)^*$.

The Jacobian of $\X$ is $J_{\X}:=(\Omega^1)^*/\Lambda$.  
If $\X$ has genus $g$, then ${\rm dim}(\Omega^1)=g$.
Also $H_1(\X, \ZZ)$ is a $\ZZ$-module of rank $2g$.
Thus $J_{\X} \simeq \CC^g/\Lambda$ is a complex torus of dimension $g$.  
In fact, it is an abelian variety of dimension $g$ \cite[Chapter 6]{GrHa}.

If $\ell$ is a prime, consider the multiplication-by-$\ell$ map $m_\ell$ on $J_{\X}$.
The kernel $J_{\X}[\ell]$ of $m_\ell$ is the subgroup of $\ell$-torsion points of the Jacobian.
As an abelian group, $J_{\X}[\ell] \simeq (1/\ell) \Lambda/\Lambda$, thus:

\begin{enumerate}
\item[(viii)] The subgroup of $\ell$-torsion points of the Jacobian 
satisfies $J_{\X}[\ell] \simeq (\ZZ/\ell)^{2g}$.
In particular, there are $\ell^{2g}$ points of the Jacobian that are $\ell$-torsion points.
\end{enumerate}

\subsection{Transition to characteristic $p>0$}

We will consider covers of curves defined over an algebraically closed field 
$k$ of characteristic $p >0$.
The topological tools used above, such as loops, are meaningless for $k$-curves.
For this reason, new algebraic definitions are needed for objects such as the fundamental group or Jacobian of a $k$-curve. 
Surprisingly, many attributes of fundamental groups and covers will remain the same in characteristic $p$.
Most importantly, Grothendieck proved that fact (iii) holds for finite groups $G$ that are prime-to-$p$ (see Section \ref{SGroth}). 
However, there are some substantial differences between the characteristic $p$ and characteristic $0$ settings.
In particular, we will see that statements (i)-(viii) are each false for covers of $k$-curves.  
In each case, the statement must be revised in characteristic $p$ to cope with the appearance of 
new ramified $p$-group covers and the disappearance of unramified $p$-group covers.

\section{Examples of covers of curves in characteristic $p >0$} \label{Sexample}

Let $k$ be an algebraically closed field of characteristic $p >0$, e.g., $k=\bar{\FF}_p$.
Before developing the theory, we provide some examples of Galois extensions of $k(x)$.  
While the constructions are simple, these examples are crucial for understanding the fundamental group in characteristic $p >0$.

For a field extension $L/K$, let $\Gal(L/K)$ be the set of automorphisms of $L$ fixing every element of $K$.  
A field extension is Galois if and only if $|\Gal(L/K)| = [L:K]$.  

\subsection{Kummer extensions}\label{Skummer}
Let $\ell$ be a prime.
As long as $\ell$ is distinct from $p$, 
Kummer extensions still yield Galois extensions of $k(x)$ with Galois group $\ZZ/\ell$:
\begin{equation} \label{Ekummer}
k(x) \hookrightarrow k(x)[y]/(y^{\ell}-x) \cong k(y).
\end{equation} 
This is an extension of degree $\ell$.  
Since $p^\alpha \equiv 1 \bmod \ell$ for some positive integer $\alpha$, 
there is an $\ell^{\mathrm{th}}$ root of unity $\zeta_\ell \in \FF_{p^\alpha} -\{1\}\subset k$.
Then $\sigma:y \mapsto \zeta_{\ell} y$ is an automorphism of degree $\ell$.
Thus $\Gal(k(y)/k(x))=\langle \sigma \rangle \simeq \ZZ/\ell$ and the extension is Galois.

The only places of $k(x)$ over which $k(y)$ is ramified are $x$ and $1/x$.
To see that $x$ is the only affine place over which the extension is ramified, 
note that $0=\partial(y^{\ell}-x)/\partial y = {\ell}y^{{\ell}-1}$ if and only if $y=0$.
For more information about this example, see \cite[III.7.3]{sti}.

If $\ell =p$, then the polynomial $t^p-1 \equiv (t-1)^p \bmod p$ has only one root in $k$.
So extension (\ref{Ekummer}) has degree $p$, but $\Gal(k(y)/k(x))$ is trivial, and thus
extension (\ref{Ekummer}) is not Galois.

\subsection{Artin-Schreier extensions}
A new equation is needed in order to produce the group $\ZZ/p$ as the Galois group 
of an extension of $k(x)$.
For $f(x) \in k[x]$ with $d=\deg(f(x))$ prime-to-$p$, consider the degree $p$ extension:  
\begin{equation} \label{Eartsch}
k(x) \hookrightarrow L:=k(x)[y]/(y^p-y-f(x)).
\end{equation}
Then $\tau:y \mapsto y +1$ is an automorphism of $L$ of order $p$ because $(y+1)^p \equiv y^p+1 \bmod p$.
Thus $\Gal(L/k(x))=\langle \tau \rangle \simeq \ZZ/p$ and the extension is Galois.

There is no affine place of $k(x)$ over which $L$ is ramified because
$$\partial (y^p-y-f(x))/\partial y =py^{p-1}-1 \equiv -1 \bmod p \neq 0.$$
The only place of $k(x)$ over which $L$ is ramified is the infinite place.
For more information about this example, see \cite[III.7.8]{sti}.

\subsection{New phenomena in characteristic $p>0$}

Artin-Schreier extensions can be used to give counterexamples to some of the facts from Section \ref{Sfalse}
for covers of $k$-curves.
First, consider the affine line $\Aa_k = \PP_k -\infty$, so that $2g+r-1=0$.
The Artin-Schreier extension (\ref{Eartsch}) with equation $y^p-y=f(x)$ yields 
a nontrivial Galois cover $\phi:Y \to \PP_k$ branched only at $\infty$.  
The decomposition group above $\infty$ has order $p$. 
This shows that facts (iii) and (iv) are false for $k$-curves.
Moreover, by changing either the degree or the coefficients of $f(x)$, one sees that these covers occur in infinite families, and thus fact (v) is false for $k$-curves as well.
It turns out that the genus of $Y$ is $(p-1)(d-1)/2$ (see Section \ref{inertia}).
This depends on a new invariant $d={\rm deg}(f(x))$ which shows that fact (vii) is false for $k$-curves.

To construct a counterexample to fact (vi) for $k$-curves, consider a tower of a Kummer and Artin-Schreier extension with equations $x_1^\ell=x$ and $y^p-y=x_1^d$ where $\ell \mid (p-1)$, $p \nmid d$, and $\ell \nmid d$.  
This yields an extension $M/k(x)$ of degree $\ell p$.
Consider the following automorphisms in ${\rm Gal}(M/k(x))$ where $\zeta_\ell$ 
is a primitive $\ell$th root of unity:
$$\tau: x_1 \mapsto x_1, y \mapsto y+1, \ {\rm and} \  \sigma:
x_1 \mapsto \zeta_\ell x_1, y \mapsto \zeta_\ell^d y.$$
Then $\sigma \tau \sigma^{-1}(y)=y + \zeta_\ell^{-d} \not = \tau(y)$.
This shows that the extension is Galois with Galois group $G$ a non-abelian semi-direct product
of the form $\ZZ/p \rtimes \ZZ/\ell$.  The extension is totally ramified above $\infty$ and so the 
decomposition group equals $G$ which is not cyclic.

For a counterexample to fact (viii) for $k$-curves, suppose $p=2$, 
and consider the $k$-curve $E$ defined by the Artin-Schreier equation $y^2-y=x^3$, which is an elliptic curve.  
Then $E$ is supersingular \cite[V, \# 5.7]{sil} and thus the Jacobian of $E$ has no $2$-torsion points other than 
the identity \cite[V, Thm.\ 3.1]{sil}.

It is harder to show, but the same phenomena contradicting facts (iii)-(vii) hold for Galois covers of an arbitrary affine $k$-curve $X - B$ with Galois group $G$ under the basic condition that $p$ divides $|G|$.  The 
same phenomenon contradicting fact (viii) occurs for any smooth projective $k$-curve of positive genus.  
These will be major themes of the next sections.
More theory about Galois covers for curves defined over $k$ is needed 
in order to define the fundamental group of a $k$-curve.
Having done so, we show that facts (i)-(ii) are also false for $k$-curves in Sections \ref{JacCov}, \ref{pro-p}, and \ref{Sanab}.    

\section{Algebraic definitions} \label{Sdefine}

Here we provide the basic definitions required to make sense of covers in arithmetic geometry.  This section is meant to be a reference for the following sections.  The reader may find it easier to skip this section and refer back to it as necessary.  The idea is to mimic the construction of covering spaces in topology and analysis, where $U \to C$ is a covering if locally the inverse function theorem holds.  In the algebraic context, the comparable concept is that of an \'etale or unramified morphism.  

Let $K$ be an algebraically closed field; (the material in this section is valid in any characteristic). Unless stated otherwise, all curves in this section are smooth connected $K$-curves.  Let $X$ be a projective $K$-curve.  The genus of $X$ is the dimension of $H^0(X, \Omega^1)$.
Let $B \subset X$ be a finite (possibly empty) set of points and let $C=X-B$.   

\subsection{Terminology for Galois covers.}

An algebraic field extension $L$ of $F$ is a {\it separable} $F$-algebra if for every element $y \in L$ the minimal polynomial of $y$ over $F$ factors into distinct linear factors in its splitting field. 
The extension is {\it inseparable} otherwise.
For example, extension (\ref{Ekummer}) is purely inseparable when $\ell=p$.
If $R$ is an integral domain and $R\subset S$ is a ring extension, then $S$ is \emph{generically separable} as an $R$-algebra if $\mathrm{frac}(S)$ is a separable $\mathrm{frac}(R)$-algebra.
A morphism of $K$-curves $\phi:Y \to X$ is \emph{generically separable} if $X$ can be covered by affine open subsets $U=\Spec(R)$ such that the ring extension $R\subset\CO(\phi^{-1}(U))$ is generically separable. 
A {\it cover} is a morphism $\phi:Y \to X$ which is finite and generically separable.  

If $\phi:Y \to X$ is a cover, then the \emph{Galois group} $\Gal(Y/X)$ consists of the automorphisms 
$\sigma$ of $Y$ satisfying $\phi \circ \sigma = \phi$.  
If $G$ is a finite group, then a $G$-\emph{Galois cover} is a cover $\phi: Y \to X$ 
\emph{together} with an inclusion $\rho: G \hookrightarrow \Gal(Y/X)$ 
such that $\CO_Y^G = f^*(\CO_X)$ (where the left side denotes the sheaf of $G$-invariants).  
If $Y$ is irreducible, this forces $\rho$ to be an isomorphism.
As $X$ is a smooth curve, this condition is equivalent to saying that $G$ acts simply transitively on a generic geometric fibre of $\phi:Y \to X$, so that $|\Gal(Y/X)| = {\mathrm deg}(f)$.
Given an abstract finite group $G$, there could be many inclusions $\rho$ with this property.
If the inclusion $\rho$ is not fixed then $\phi:Y \to X$ is called a \emph{Galois cover with Galois group $G$}.

For example, extension (\ref{Eartsch}) 
is a Galois cover with group $\ZZ/p$ and extension (\ref{Eartsch}) together with 
the choice of automorphism $\tau:y \mapsto y+1$ is a $\ZZ/p$-Galois extension.

\subsection{Ramification: Wild, tame and $p$-tame}\label{inertia}
Let $\phi:Y \to X$ be a $G$-Galois cover.
The cover $\phi$ is {\it prime-to-$p$} if $|G|$ is prime-to-$p$.

Let $Q$ be a point of $Y$ and let $P = \phi(Q) \in X$.  
The {\it decomposition group} $D_Q$ at $Q$ is the subgroup of $G$ 
consisting of automorphisms that fix the point $Q$.  
The number of points in the fibre $\phi^{-1}(P)$ equals $|G|/|D_Q|$.
The {\it inertia group} $I_Q$ at $Q$ is the subgroup of $D_Q$ that induces the identity automorphism on the residue field at $Q$.  
Since $K$ is algebraically closed, the inertia group equals the decomposition group.  
The cover is {\it ramified} at $Q$ if $I_Q$ is non-trivial, and it is {\it totally ramified} at $Q$ if $I_Q=G$.
The {\it branch locus} of $\phi$ is the set of points $P \in X$ for which there exists a ramified point $Q \in \phi^{-1}(P)$.
The phrase {\it with branch locus in $B$} means that the branch locus is contained in $B$.

When $K$ has characteristic $p>0$, the cover $\phi$ is {\it wildly ramified} at $Q$ if $p$ divides $|I_Q|$ 
and is {\it tame} otherwise.  
The cover $\phi$ is tame if it is tame at all ramification points and is wild otherwise.  
When $K$ has characteristic $0$, a ramified point $Q$ is {\it $p$-tame} 
if $p$ does not divide $|I_Q|$.

For example, if $\ell \not = p$, then equation (\ref{Ekummer}) yields a cover $\phi:Y \to \PP_K$ with branch locus $B=\{0, \infty\}$, 
which is tame because the inertia group is $\ZZ/\ell$ above both branch points.
Equation (\ref{Eartsch}) yields a cover $\phi:Y \to \PP_K$ branched only at $\infty$, which is wild because 
the inertia group is $\ZZ/p$. 

\subsection{The fundamental group}\label{fundgrp}

If $Z \to X$ is a $G$-Galois cover with branch locus in $\B$ and $\pi:G \to H$ is a surjection of finite groups, 
then $Z \to X$ must factor through an $H$-Galois cover $Y \to X$ with branch locus in $B$.  
Consider the set of Galois groups of finite Galois covers  of $X$ with branch locus in $B$.
Consider also the collection of surjections $\pi:G \to H$ 
when a $G$-Galois cover $Z \to X$ with branch locus in $B$ factors through an $H$-Galois cover with branch locus in $B$.
This set of groups and collection of surjections forms an inverse system.
The inverse limit of this system is the algebraic fundamental group $\pi_1(C)$ where $C=X-B$.
The isomorphism class of $\pi_1(C)$ does not depend on the choice of the base point so we eliminate the 
base point from the notation.
A more precise and complete definition of the fundamental group can be found in \cite[Section 2]{Mezard}.

By definition, the finite quotients of $\pi_1(C)$ correspond to finite Galois covers of $C$.
Thus to understand $\pi_1(C)$ one needs to understand:
\begin{enumerate}
\item  What are the finite quotients of $\pi_1(C)$?
\item  How do the finite quotients fit together into an inverse system?
\end{enumerate}
Answering the first question is called ``the inverse Galois problem" for $C$.  The second question is more subtle and is related to embedding problems.  Roughly speaking, question (2) asks:  Given an $H$-Galois cover 
$\psi:V \to C$ and a surjection $G \twoheadrightarrow H$, what $G$-Galois covers of 
$C$ exist which factor through $\psi$?  (See Section \ref{freeness}.)

For an algebraically closed field $K$ of characteristic 0, Grothendieck showed that the fundamental group of a $K$-curve $X-B$ of genus $g$ with $r=|B|$ punctures is isomorphic to the profinite completion of the topological 
fundamental group of a Riemann surface of genus $g$ with $r$ punctures \cite[XIII, Cor.\ 2.12]{Gr}. 
Thus, $\pi_1(X - B)$ is the group obtained by taking the profinite group on generators 
$\alpha_1, \beta_1,...,\alpha_g,\beta_g, \gamma_1,..., \gamma_r$ and imposing the sole 
relation $\prod_{i=1}^g [\alpha_i,\beta_i]\prod_{j=1}^r \gamma_j = 1$.
In particular, if $r > 0$, then $\pi_1(X - B)$ is a free profinite group on $2g+r-1$ generators.  This implies that every group generated by $2g+r-1$ elements is a quotient of $\pi_1(X-B)$. 
Moreover, the freeness implies that:  Given an $H$-Galois cover $\psi: V \to C$, a group $G$ generated by $2g+r-1$ elements, and a surjection 
$G \twoheadrightarrow H$, there exists a $G$-Galois cover of $C$ that factors through $\psi$. 

For example, if ${\rm char}(K)=0$ and $X=\PP_K$ and $B=\{0,\infty\}$, then $\pi_1(X-B)$ 
is the profinite group $\hat{\ZZ}$ on one generator. 
This implies that, for each $\ell \in \NN$, there is exactly one isomorphism class of $\ZZ/\ell$-Galois cover 
$\phi_\ell:Y \to \PP_K$ branched at $B=\{0,\infty\}$.  The Kummer Equation (\ref{Ekummer}) in Section \ref{Skummer} is an equation representing this isomorphism class.  
If $\ell_1 \mid \ell_2$, then $\phi_{\ell_2}$ factors through $\phi_{\ell_1}$.

In summary, for a curve defined over an algebraically closed field $K$ of characteristic $0$, the fundamental group is finitely generated as a profinite group; in particular, this implies that the answer to question (2) above is completely determined by the answer to question (1). This is because a finitely generated profinite group is determined by its finite quotients \cite[Prop.\ 15.4]{FJ}. 

The fundamental group of a curve defined over an algebraically closed field $k$ of characteristic $p>0$ 
is known in only two cases: (1) when $X$ is the projective line $\PP_k$ and $B=\emptyset$ then $\pi_1(X)$ is trivial; and (2) when $X$ is an elliptic curve $E$ and $B = \emptyset$ then $\pi_1(E)$ is a finitely generated abelian profinite group, 
(see subsection \ref{JacCov}).
Section \ref{Stheory} contains some of the major results obtained about the fundamental group and its finite quotients in characteristic $p >0$.   In particular, when $B \not = \emptyset$, the fundamental group is not finitely generated as a profinite group and is not determined by its finite quotients.  Thus understanding how the groups fit together, as in question (2), is essential to determining the profinite group structure of the fundamental group.

\subsection{Translation into field theory} The material in Section \ref{fundgrp} can be reinterpreted in terms of field extensions as follows.  The function field $K(C)$ of $C$ is the same as that of $X$.   A separable closure $K(C)^{\mathrm sep}$ is an infinite Galois extension of $K(C)$ whose Galois group $\Gal_{K(C)}$ is called the absolute Galois group of $C$.
  
%The group $\Gal_{K(C)}$ is trivial when $C$ is the projective line or when ${\rm char}(K)=0$ and $C$ is the 
%once-punctured projective line. 
%In all other cases, $\Gal_{K(C)}$ is infinite.
%This follows from the fact that there is a bijection between surjections of $\Gal_{K(C)}$ onto a finite group $G$ and $G$-Galois extensions $L/K(C)$.  

Given a $G$-Galois extension $L/K(C)$,   
consider an open cover of $C$ by affine opens $U_i = \Spec(R_i)$ and let $S_i$ be the integral closure of $R_i$ in $L$ and let $V_i=\Spec(R_i)$.  The affine opens $V_i$ cover a curve $V$ and there is a cover $V \to C$.  There is a $G$-Galois cover $\phi:Y \to X$ where $Y$ is the projective closure of $V$.
However, any point of $X$ could be a branch point of $\phi$
so this may not correspond to a surjection of $\pi_1(C)$ onto $G$.

To remedy this, one instead considers the maximal Galois extension $K(C)_{un,B}$ of $K(C)$ unramified outside of the set of places in $K(C)$ for points in $B$.  Then there is a bijection between surjections $\pi: \Gal(K(C)_{un,B}/K(C) \twoheadrightarrow G$ and $G$-Galois covers $\phi: Y\to X$ with branch locus in $B$.  Furthermore, 
$\pi$ factors through a surjection $\pi': \Gal(K(C)_{un,B}/K(C) \twoheadrightarrow \Gamma$ if and only if the 
$G$-Galois cover $\phi$ can be dominated by a $\Gamma$-Galois cover $\phi'$ with branch locus in $B$.   
Thus, $\Gal(K(C)_{un,B}/K(C) =\pi_1(C)$.  

\[
\xymatrix {
&\mbox{\tiny Any finite extension}&&\mbox{\tiny  Maximal Galois extension unramified outside B} &\mbox{\tiny A separable closure}\\
K(C)\ar@{-}[r]^G\ar@/_1.5pc/@{-}[rrr]_{\pi_1(C)}&L\ar@{-}[rr] &&K(C)_{un,B}\ar@{-}[r]&K(C)^{{\mathrm sep}}
}
\]

\subsection{Higher ramification groups} 

There is extra ramification information at a wildly ramified point $Q$, including a filtration of $I_Q$ 
called the filtration of higher ramification groups, \cite[IV]{Se:lf}.
If $\phi:Y \to X$ is ramified at $Q$, consider the complete local ring $\hat{\mathcal O}_Q$ of functions at $Q$ and the valuation function $\nu_Q$.  For any integer $i \geq -1$ the \emph{$i$th ramification group} at $Q$ is $$I_i(Q)=\{\sigma \in D_Q | \nu_{Q}(\sigma(z)-z) \geq i+1 ,\forall z \in \hat{\mathcal O}_Q\}.$$
The decomposition group at $Q$ is $I_{-1}(Q)$ and the inertia group is $I_{0}(Q)$.  
The inertia at a wildly ramified point is usually not cyclic though it is always cyclic-by-$p$,
in that it has a normal Sylow $p$-subgroup $I_1(Q)$ and the quotient $I_Q/I_1(Q)$ is cyclic and prime-to-$p$.

The genus of $Y$ now depends on the ramification filtration.
The Riemann-Hurwitz formula states that $2g_Y-2=|G|(2g_X-2) + {\rm Ram}$ where ${\rm Ram}$ is the sum of the 
degrees of the different at each ramified point $Q$.  
The degree of the different at $Q$ equals $\sum_{i=0}^{\infty} (|I_i(Q)|-1)$.
If ${\rm char}(K)=0$, then $\rm Ram = \sum_{Q \in \phi^{-1}(B)} (|I_Q|-1)$ which recovers the equation in part (vii) of Section \ref{Sramcx}.  If ${\rm char}(K)>0$, then $\rm Ram \geq \sum_{Q \in \phi^{-1}(B)} (|I_Q|-1)$.  From this, one sees that the genus of $Y$ can grow ``more quickly" in positive characteristic than in characteristic zero.

For the Artin-Schreier extension $y^p-y=f(x)$ 
in Equation \ref{Eartsch} where ${\rm deg}(f(x))=d$ and $p \nmid d$, if $Q$ is the point above $\infty$ then $I_i(Q)=\ZZ/p$ if $0 \leq i \leq d$ and $I_i(Q)=\{0\}$ if $i >d$, \cite[III.7.8(c)]{sti}. 
By the Riemann-Hurwitz formula, $g_Y=(p-1)(d-1)/2$.

\subsection{The Jacobian and torsion points} \label{Sjacobian}

Let $X$ be a smooth projective $K$-curve of genus $g$. 
A divisor on $X$ is a formal sum $\sum_{P \in X} n_PP$ where $n_P \in \ZZ$ and $n_P = 0$ for all but finitely many $P \in X$.   
The degree of a divisor $D=\sum_{i=1}^r n_iP_i$ is $\sum_{i=1}^r n_i$ and ${\rm Div}^0(X)$ denotes the abelian group of all divisors of $X$ of degree $0$.
Given a non-zero element $f$ in the function field $K(X)$ of $X$, there is 
a divisor $\dv(f)=\sum_{P \in X} {\rm ord}_P(f)P$.
A divisor $D$ is {\it principal} if $D=\dv(f)$ for some function $f \in K(X)$.
Every principal divisor has degree zero.
Let ${\rm Prin}(X)$ be the set of all principal divisors of $X$.
The sets ${\rm Div}^0(X)$ and ${\rm Prin}(X)$ are abelian groups under addition and ${\rm Prin}(X) \subset {\rm Div}^0(X)$.
The algebraic definition of the Jacobian $J_X$ of $X$ is $J_X:={\rm Div}^0(X)/{\rm Prin}(X)$.
This is an abelian group which is naturally isomorphic to the $K$-points  of an abelian variety of dimension $g$ over $K$, which we also denote $J_X$.

For a prime $\ell$, consider the multiplication-by-$\ell$ morphism $m_\ell$ on $J_X$.
The $\ell$-torsion $J_X[\ell]$ of the Jacobian is the kernel of $m_\ell$.
The $K$-points of $J_X[\ell]$ can be identified with the set 
$$\{[D]  \in J_X ~|~\rm{there ~exists} ~ f \rm{~such~that} ~ \ell D = \dv(f)\}.$$
If $\ell \not = p$, then $m_\ell$ is separable of degree $\ell^{2g}$.
Thus $J_X[\ell]\simeq (\ZZ/\ell)^{2g}$ \cite[pg. 64]{mumford}.

For example, suppose ${\rm char}(K) \not = 2$ and $Y$ is a hyperelliptic $K$-curve with equation $y^2=\prod_{i=1}^{2g+1}(x-b_i)$.  Let $Q_\infty$ be the point at infinity of $Y$ and let $Q_i$ 
be the point $(x,y)=(b_i, 0)$ for $1 \leq i \leq 2g+1$.  
The divisors $D_i=Q_i-Q_\infty$ are 2-torsion points of $J_Y$ since $2D_i=\dv(x-b_i)$.
There is a relation $0=\dv(y)=\sum_{i=1}^{2g+1} D_i$ in $J_Y$. 
There is no nontrivial linear relation $\sum_{i=1}^{2g} a_i D_i =0$ with $a_i \in \{0,1\}$
because $\prod_{i=1}^{2g} (x-b_i)^{a_i}$ is not a square in $k[x,y]/(y^2 - \prod_{i=1}^{2g+1} (x-b_i))$.
Thus the set $\{D_1, \ldots, D_{2g}\}$ is linearly independent and hence is a basis for $J_Y[2]$. 

In contrast, if $p={\rm char}(K)$,
the multiplication-by-$p$ morphism $m_p$ factors as the composition of the Frobenius morphism, which is inseparable of degree $p^g$
and the Verschiebung morphism which is separable of degree $p^g$.
This implies that $J_X[p]$ is a group scheme of rank $p^{2g}$.
The number of points in $J_X[p](K)$ 
equals $p^s$ for some integer $s$ such that $0 \leq s \leq g$.
Here $s$ is called the {\it $p$-rank} of $X$. 

For example, an elliptic curve defined over an algebraically closed field of characteristic $p$ 
can be either ordinary ($s=1$) or supersingular ($s=0$).
If $f(x)=x(x-1)(x-\lambda)$, the elliptic curve $y^2=f(x)$ is supersingular if and only if 
$\lambda$ is a root of the coefficient of $x^{p-1}$ in $f(x)^{(p-1)/2}$ \cite[V, Thm.\ 4.1]{sil}.
It can be computationally difficult to determine the $p$-rank of a curve of higher genus.
For Jacobians of hyperelliptic curves, an algorithm to compute the $p$-rank can be found in \cite{Y}.
Another situation where the $p$-rank can be computed is when $\phi:Y \to X$ is a Galois cover 
whose Galois group $G$ is a $p$-group.  If $|G|=p^a$, 
then the Deuring-Shararevich formula \cite[Cor. 1.8]{DS} states that
$$s_Y-1=p^a(s_X-1)+\sum_{Q \in Y}(|I_Q|-1).$$

\subsection{Unramified covers and the Jacobian}\label{JacCov}

In this section, we describe a connection between the $\ell$-torsion points of the Jacobian of $X$ and unramified $\ZZ/\ell$-Galois covers of $X$.

For some intuition about this connection, consider the example of an elliptic curve $E$ over $K$.
If $\ell$ is prime-to-$p$, then the multiplication-by-$\ell$ morphism $m_\ell: E \to E$ is a separable 
cover of degree $\ell^2$ \cite[III, Cor.\ 5.4]{sil}.  
Suppose $Q \in E$ is an $\ell$-torsion point.  If $R\in E$, then $m_\ell(R+Q) =m_\ell(R)$.
Thus there is an automorphism $\sigma_Q$ of $E$ of order $\ell$ defined by $\sigma_Q(R)=R+Q$
and $m_\ell \circ \sigma_Q = m_\ell$.
In other words, $m_\ell$ is a Galois cover, whose Galois group can be identified with 
$J_E[\ell]$.   
After choosing a basis for $J_E[\ell]$, then $m_\ell$ is a $(\ZZ/\ell)^2$-Galois cover.

Continuing this example, suppose $Q'$ is a point of order $\ell$ on the Jacobian of $E$.  
Then $Q'$ can be canonically identified with a point $Q$ of order $\ell$ on $E$, \cite[X, Thm.\ 3.8]{sil}.
Consider the subgroup $H_Q=\langle \sigma_Q \rangle \subset J_E[\ell]$.
Note that $J_E[\ell]/H_Q$ is a cyclic group of order $\ell$.
Let $E_Q$ be the quotient of $E$ by $H_Q$.
The quotient cover $E_Q \to E$ is a $\ZZ/\ell$-Galois cover, which is unramified by the Riemann-Hurwitz theorem.  
To summarize, every $\ell$-torsion point on the Jacobian of $E$ yields an unramified $\ZZ/\ell$-Galois cover of $E$.
Using this, one can prove that
$$\pi_1(E) \simeq \hat \ZZ_p^s \times \prod_{\ell \not = p} (\hat{\ZZ}_\ell \times  \hat{\ZZ}_\ell)$$
where $s=1$ if E is ordinary, and $s=0$ if E is supersingular \cite[IV, exercise 4.8]{Hart}.
This gives a counterexample to fact (ii) from Section \ref{Sfalse} 
for a projective curve defined over an algebraically closed field of characteristic $p>0$.

For a projective $K$-curve $X$ of higher genus, the bijection between $\ell$-torsion points of $J_X$ 
and unramified $\ZZ/\ell$-Galois covers of $X$ is harder to construct.  
As in \cite[III, Section 4]{milne}, 
one defines $\pi^1(X,\ZZ/\ell)$ to be the set of isomorphism classes of unramified 
$\ZZ/\ell$-Galois covers of $X$.     
By \cite[Remarks following Section III, Prop 4.11]{milne}, 
for $\ell$ prime to ${\rm char}(K)$, 
the group $\pi^1(X,\ZZ/\ell) \cong H^1(X,\ZZ/\ell)$ is isomorphic to $J_X[\ell](K)$. 
Similarly, 
$\pi^1(X,\ZZ/p) \cong H^1(X,\ZZ/p) \cong J_X[p](K)$ for $p={\rm char}(K)$ by 
\cite[Remarks following Section III, Prop 4.13]{milne}. 
Thus the $p$-rank equals the maximum rank of a $p$-group
which occurs as the Galois group of an unramified cover of $X$ \cite[Cor.\ 4.18]{milne}.

\section{Major results} \label{Stheory}

Let $k$ be an algebraically closed field of characteristic $p >0$. 
Let $X$ be a smooth connected projective $k$-curve of genus $g$.
Let $B \subset X$ be a finite subset of $r$ points.  

\subsection{The prime-to-$p$ fundamental group}\label{SGroth}

The main point of this section is Grothendieck's result that the prime-to-$p$ groups that occur for Galois covers 
in characteristic $p$ are exactly the same as those that occur in characteristic $0$, 
namely those generated by $2g+r-1$ elements.
Recall the definitions of prime-to-$p$, tame, and $p$-tame covers from Section \ref{inertia}.
The {\it prime-to-$p$ fundamental group} $\pi_1^{p'}(X-B)$ is the inverse limit of the
Galois groups of finite Galois prime-to-p covers of $X$ with branch locus in $B$.
The {\it tame fundmental group} $\pi_1^t(X-B)$ is the inverse limit of the
Galois groups of finite Galois covers of $X$ with branch locus in $B$ and tamely ramified over $B$.  
The result also shows that $\pi^t(X-B)$ and $\pi_1^{p'}(X-B)$ are finitely generated as profinite groups and, as such, 
are determined by their finite quotients \cite[Prop.\ 15.4]{FJ}. 

The basic idea behind Grothendieck's proof is that tame covers in characteristic $p$ lift to $p$-tame covers in characteristic $0$.  More precisely, let $A$ be a complete local ring with residue field $k$.
By \cite[Exp.\ V, Cor.\ 7.4]{Gr}, 
there exists a smooth projective $A$-curve $\X_A$ such that the closed fibre is $X$.
In particular, taking $A$ to be a complete discrete valuation ring of mixed characteristic with residue field $k$, then the generic fibre $\X$ of $\X_A$ is a lift of $X$ to characteristic $0$.
Let $\B_A$ be a set of horizontal sections specializing to $B$ and let $\B$ be its generic fibre. 
The subset $\B \subset \X$ is a lift of $B \subset X$ to characteristic $0$.
Given a tame $G$-Galois cover $\phi:Y \to X$ with branch locus in $B$, there exists a $G$-Galois cover $\varphi_A:\Y_A \to \X_A$
with branch locus in $\B_A$ whose special fibre is isomorphic to $\phi$.
The generic fibre is a $p$-tame $G$-Galois cover $\varphi: \Y \to \X$ with branch locus in $\B$ in characteristic zero. 

Let $\pi_1^{p-\mathrm{tame}}(\X -\B)$ be the inverse limit of the system of finite groups that occur as Galois groups of $p$-tame covers of $\X$ with branch locus in $\B$. 
Also, consider the prime-to-$p$ fundamental group $\pi_1^{p'}(\X-\B)$.
The previous paragraph summarizes the main ideas of the proof of the following result.

\begin{theorem} \cite[XIII, Cor.\ 2.12]{Gr}\label{GR}
With notation as above, $$\pi_1^{p-\mathrm{tame}}(\X -\B) \twoheadrightarrow \pi_1^t(X -B)$$ and
$$\pi_1^{p'}(\X -\B) \simeq \pi_1^{p'}(X -B).$$
\end{theorem}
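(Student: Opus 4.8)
The plan is to realize both maps as instances of the specialization homomorphism attached to the relative curve $\X_A - \B_A \to \Spec(A)$, and then to pin down its behavior on tame and prime-to-$p$ covers using the lifting of covers summarized just before the theorem.

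First I would produce the specialization homomorphism. Since $\X_A$ is smooth and projective over the complete discrete valuation ring $A$ with geometrically connected fibres, and $\B_A$ is finite \'etale over $\Spec(A)$, the theory of the fundamental group of such a relative curve attaches to the generic and special geometric fibres a specialization homomorphism between their fundamental groups. Passing to the tame quotients, and using that a cover with inertia of order prime-to-$p$ reduces to a cover that is again tamely ramified (tame ramification of order $e$ is locally the extraction of an $e$-th root, which persists once $p \nmid e$), this yields homomorphisms $\mathrm{sp}\colon \pi_1^{p-\mathrm{tame}}(\X - \B) \to \pi_1^t(X - B)$ and $\pi_1^{p'}(\X - \B) \to \pi_1^{p'}(X - B)$. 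Concretely, on covers $\mathrm{sp}$ takes a $p$-tame cover of $\X - \B$ to the special fibre of its essentially unique extension to a tame cover over $A$.

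Surjectivity of both restricted maps is exactly the lifting statement recalled before the theorem. Given a tame $G$-Galois cover $\phi\colon Y \to X$ with branch locus in $B$, there is a $G$-Galois cover $\varphi_A\colon \Y_A \to \X_A$ over $A$ with special fibre $\phi$, whose generic fibre is a $p$-tame $G$-Galois cover of $\X - \B$; when $|G|$ is prime-to-$p$ the generic fibre is again prime-to-$p$. Thus every finite quotient of $\pi_1^t(X - B)$ is realized compatibly as a quotient of $\pi_1^{p-\mathrm{tame}}(\X - \B)$, so the continuous homomorphism $\mathrm{sp}$ is surjective onto each finite quotient of the target and is therefore surjective. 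This proves the first assertion, and the identical argument gives surjectivity of $\pi_1^{p'}(\X - \B) \to \pi_1^{p'}(X - B)$.

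It remains to prove injectivity on the prime-to-$p$ part, which I expect to be the main obstacle. Here I would use the characteristic-$0$ description recalled in Section \ref{fundgrp}: by Grothendieck's comparison theorem $\pi_1(\X - \B)$ is the profinite completion of the surface group on $\alpha_1, \beta_1, \ldots, \alpha_g, \beta_g, \gamma_1, \ldots, \gamma_r$ subject to $\prod_{i=1}^g [\alpha_i, \beta_i] \prod_{j=1}^r \gamma_j = 1$, so its prime-to-$p$ quotient $\pi_1^{p'}(\X - \B)$ is completely determined by $g$ and $r$. The content is that $\mathrm{sp}$ collapses no two distinct prime-to-$p$ covers, i.e.\ its kernel on the prime-to-$p$ part is trivial; equivalently, that specialization induces a \emph{bijection}, not merely a surjection, on isomorphism classes of prime-to-$p$ $G$-Galois covers with fixed branch locus in $B$. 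This rigidity is the deep local-to-global input: the hard point is not producing lifts but showing there are no extra prime-to-$p$ covers upstairs and that distinct ones stay distinct after reduction. Granting it, a surjection of profinite groups inducing a bijection on finite prime-to-$p$ quotients is an isomorphism, yielding $\pi_1^{p'}(\X - \B) \simeq \pi_1^{p'}(X - B)$.
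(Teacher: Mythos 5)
Your proposal follows essentially the same route as the paper: the paper's own ``proof'' is precisely the outline you give---deform $X$ and $B$ over a complete mixed-characteristic discrete valuation ring $A$, lift a tame $G$-Galois cover of the special fibre to a $G$-Galois cover of $\X_A$ with branch locus in $\B_A$, observe that its generic fibre is $p$-tame, and deduce surjectivity of the specialization map---while the injectivity on the prime-to-$p$ part is not proved in the paper either, being left to the cited result of Grothendieck (SGA~1, XIII, Cor.\ 2.12), exactly as you ``grant'' it in your final paragraph. So your treatment matches, and is if anything slightly more explicit than, the paper's outline, correctly isolating the rigidity/injectivity statement as the deep input that both you and the authors defer to the reference.
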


In particular, taking $B=\emptyset$ in Theorem \ref{GR} yields a surjection
$\pi_1(\X) \twoheadrightarrow \pi_1(X)$ and thus $\pi_1(X)$ is finitely generated as a profinite group.
The kernel of this homomorphism is not well-understood.

\subsection{The pro-$p$ fundamental group}\label{pro-p}

Unlike the case for prime-to-$p$ fundamental groups, the structure of the pro-$p$ fundamental group of a curve
changes significantly in characteristic $p$, and depends crucially on whether the curve is projective or affine. 
Let $\pi_1^{p}(X-B)$ be the inverse limit of the system of finite $p$-groups that occur as Galois groups of 
covers of $X$ with branch locus in $B$.

\begin{theorem} \cite{shaf}, \cite[Thm.\ 1.9]{DS} \label{ds}
If $X$ is a projective $k$-curve with $p$-rank $s_X$, then
the pro-$p$ fundamental group $\pi_1^p(X)$ is a free pro-$p$ group on $s_X$ generators.  
\end{theorem}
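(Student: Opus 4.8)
The plan is to apply the cohomological criterion for free pro-$p$ groups (Serre): a pro-$p$ group $\Pi$ is free pro-$p$ exactly when its cohomological $p$-dimension $\mathrm{cd}_p(\Pi)$ is at most one, equivalently $H^2(\Pi,\FF_p)=0$, in which case its rank equals $\dim_{\FF_p}H^1(\Pi,\FF_p)$. So I would first pin down the number of generators of $\Pi=\pi_1^p(X)$ and then prove freeness. For the generator count, continuous homomorphisms $\Pi\to\ZZ/p$ are the same as unramified $\ZZ/p$-Galois covers of $X$, so by the identifications recalled in Section~\ref{JacCov} we have $H^1(\Pi,\FF_p)=\mathrm{Hom}(\Pi,\FF_p)\cong\pi^1(X,\ZZ/p)\cong J_X[p](k)\cong(\ZZ/p)^{s_X}$. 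Thus $\Pi$ is minimally generated by $s_X$ elements, which will be the claimed rank once freeness is established.

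For freeness I would use the equivalent form of Serre's criterion phrased in terms of subgroup growth: a finitely generated pro-$p$ group $\Pi$ is free pro-$p$ if and only if every open subgroup $U$ satisfies the Nielsen--Schreier index equality $d(U)-1=[\Pi:U]\,(d(\Pi)-1)$, where $d(\cdot)$ is the minimal number of generators. The Deuring--Shafarevich formula is precisely what verifies this hypothesis. Since $X$ is projective with empty branch locus, every finite quotient of $\Pi$ corresponds to an \emph{unramified} $p$-group cover; for an open subgroup $U$ of index $n=p^a$, realized by the cover $Y\to X$ with $U=\pi_1^p(Y)$, all inertia groups are trivial, so the formula collapses to $s_Y-1=p^a(s_X-1)$. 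Applying the generator count of the first paragraph to $Y$ gives $d(U)=s_Y$ and $d(\Pi)=s_X$, so this is exactly $d(U)-1=n\,(d(\Pi)-1)$.

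The main obstacle is the hard direction of the pro-$p$ criterion: that the Schreier equality for all open subgroups forces $H^2(\Pi,\FF_p)=0$. The mechanism is transparent. Writing a minimal presentation $1\to R\to F\to\Pi\to1$ with $F$ free pro-$p$ of rank $d(\Pi)$ and letting $F_U$ be the preimage of $U$, the Nielsen--Schreier formula for free pro-$p$ groups makes $F_U$ free of rank $1+n(d(\Pi)-1)$, and since $U=F_U/R$ one always has the inequality $d(U)\le 1+n(d(\Pi)-1)$. The point is that a nontrivial relation subgroup $R$ makes this inequality strict for suitable $U$ of large index, so the Deuring--Shafarevich equality can hold for all $U$ only if $R=1$, whence $\Pi=F$ is free of rank $s_X$. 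Quantifying this strictness --- via the inflation--restriction sequence for $1\to R\to F_U\to U\to1$ and the resulting generator--relation bookkeeping --- is the technical heart of the argument. Two reductions round things out but cause no trouble: that it suffices to treat embedding problems with central kernel $\ZZ/p$, and that $\FF_p$-coefficients already detect freeness.
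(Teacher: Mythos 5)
Your proposal is correct in outline and, for the freeness half, takes a genuinely different route from the paper. The paper's proof starts from the same two pro-$p$ facts you do (minimal generator number $=\dim_{\FF_p}{\rm Hom}(\Pi,\ZZ/p)$, and freeness $\iff H^2(\Pi,\ZZ/p)=0$) and computes the generator count exactly as you do via Section \ref{JacCov}; but it obtains the vanishing of $H^2$ from \'etale cohomology, namely from the fact that the $p$-cohomological dimension of the projective curve $X$ is less than two (citing SGA 4), and it never uses the Deuring--Shafarevich formula. You instead verify Schreier's index formula for all open subgroups using Deuring--Shafarevich and then invoke the purely group-theoretic fact that a finitely generated pro-$p$ group satisfying Schreier's formula is free. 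What your route buys is that it avoids \'etale cohomological dimension entirely and runs on the geometric input already stated in the survey; what it costs is exactly the group theory you defer as ``the technical heart.''

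Two points need shoring up, and both can be fixed. First, the deferred step is true, but the mechanism is a Frattini argument in a single well-chosen finite quotient, not an asymptotic argument over ``suitable $U$ of large index'': if $R\neq 1$ in the minimal presentation $1\to R\to F\to \Pi\to 1$, pick $1\neq r\in R$ and an open normal subgroup $W$ of $F$ with $r\notin W$, and set $F'=RW$, an open normal subgroup of $F$ containing $R$. The image of $R$ in the finite $p$-group $F'/W$ is all of $F'/W$, which is nontrivial; since an epimorphism of pro-$p$ groups carries Frattini subgroups onto Frattini subgroups, $R\subseteq\Phi(F')$ would force $F'/W\subseteq\Phi(F'/W)$, which is impossible for a nontrivial finite $p$-group. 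Hence some homomorphism $\chi\colon F'\to\ZZ/p$ is nonzero on $R$, and the five-term sequence for $1\to R\to F'\to U\to 1$ (where $U=F'/R$) shows that inflation $H^1(U,\FF_p)\to H^1(F',\FF_p)$ is not surjective, so $d(U)<d(F')=1+[\Pi:U](d(\Pi)-1)$ and Schreier fails at $U$. Note that this $U$ is normal in $\Pi$, so in your second paragraph you only ever need Deuring--Shafarevich for Galois covers, the form in which the paper states it. Second, you silently identify the open subgroup $U$ attached to the cover $Y\to X$ with $\pi_1^p(Y)$; a priori $U$ is only a quotient of $\pi_1^p(Y)$, and you need equality to get $d(U)=s_Y$. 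Equality holds because the pro-$p$ kernel $N$ of $\pi_1(X)$ is topologically generated by the prime-to-$p$ parts of elements, a conjugation-invariant set; every such element lies in $N\subseteq\pi_1(Y)$, and the closed subgroup it generates is normal in $\pi_1(X)$, hence in $\pi_1(Y)$, with pro-$p$ quotient $U$. Thus $N$ is also the pro-$p$ kernel of $\pi_1(Y)$, so $U=\pi_1(Y)/N=\pi_1^p(Y)$. With these two patches your argument is complete and independent of the paper's.
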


\begin{proof} (Outline following \cite{DS}.)  If $\pi$ is a pro-$p$ group then (1) the minimal number of generators of $\pi$ is equal to ${\rm dim}_{\FF_p} {\rm Hom}(G, \ZZ/p)$; and (2) $\pi$ is free if and only if $H^2(G,\ZZ/p) =0$ (\cite[Thm.\ 12 and Cor.\ 2 to Prop.\ 23]{shatz}).  Thus it suffices to show that ${\rm dim}_{\FF_p} {\rm Hom}(\pi_1(X), \ZZ/p) = s_X$ and $H^2(\pi_1(X),\ZZ/p) =0$. 
Item (1) is discussed in Section \ref{JacCov}.  
Item (2) follows from the fact that the $p$-cohomological dimension of $X$ is less than two \cite[IX, 3.5]{SGA4}.  
\end{proof}

\begin{theorem} \cite{shaf}\label{shaf}
If $C$ is an affine $k$-curve, then the pro-$p$ fundamental group $\pi_1^p(C)$ is infinitely generated.
\end{theorem}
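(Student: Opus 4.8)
The plan is to bound below the minimal number of generators of the pro-$p$ group $\pi_1^p(C)$ using the same cohomological criterion recalled in the proof of Theorem~\ref{ds}: for a pro-$p$ group $\pi$, the minimal number of generators equals $\dim_{\FF_p} \mathrm{Hom}(\pi, \ZZ/p)$. Thus it suffices to show that $\mathrm{Hom}(\pi_1^p(C), \ZZ/p)$ is infinite-dimensional over $\FF_p$. Since homomorphisms $\pi_1^p(C) \to \ZZ/p$ are the same as homomorphisms $\pi_1(C) \to \ZZ/p$, they classify the $\ZZ/p$-Galois covers of $X$ branched only in $B$, which are exactly the Artin--Schreier covers that are regular over $C$. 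I would make this concrete. Write $C = X - B$ with $B \neq \emptyset$ and let $A = \CO(C) = H^0(C, \CO_C)$, a normal (since $C$ is smooth) one-dimensional domain with fraction field $k(X)$. For $f \in A$, the equation $y^p - y = f$ defines a $\ZZ/p$-cover of $X$ that is unramified over $C$, since $f$ has no poles on $C$; this yields a homomorphism $\chi_f \colon \pi_1^p(C) \to \ZZ/p$. The assignment $f \mapsto \chi_f$ is $\FF_p$-linear, and its kernel is $\wp(A) := \{g^p - g : g \in A\}$: if $f = g^p - g$ with $g \in k(X)$, then $g$ is integral over $A$ and lies in $\mathrm{Frac}(A)$, hence $g \in A$ by normality. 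So $A/\wp(A)$ injects into $\mathrm{Hom}(\pi_1^p(C), \ZZ/p)$, and it remains to prove $\dim_{\FF_p} A/\wp(A) = \infty$.

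The engine for this is a pole-order estimate at a chosen point $P \in B$. The key observation is that every nonzero element of $\wp(A)$ has pole order at $P$ divisible by $p$: if $g \in A$ has pole order $m \geq 1$ at $P$, then $g^p$ has pole order $pm > m$, so $\wp(g) = g^p - g$ has pole order exactly $pm$ (and $\wp(g)$ is regular at $P$ when $g$ is). I would then produce, using Riemann--Roch, functions with prescribed prime-to-$p$ pole orders: for all sufficiently large $n$ there is a function in $L(nP) \subseteq A$ with pole order exactly $n$ at $P$, since all large integers are non-gaps at $P$, and such functions, having poles only at $P \in B$, lie in $A$. Choosing $f_1, f_2, \ldots \in A$ with strictly increasing pole orders $n_1 < n_2 < \cdots$ all prime to $p$, any nontrivial $\FF_p$-linear combination $\sum_i c_i f_i$ has pole order equal to the largest $n_j$ with $c_j \neq 0$, which is prime to $p$; by the key observation it cannot lie in $\wp(A)$. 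Therefore the $f_i$ are $\FF_p$-linearly independent modulo $\wp(A)$, so $A/\wp(A)$ is infinite-dimensional and $\pi_1^p(C)$ is infinitely generated.

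The main obstacle is the identification of $\mathrm{Hom}(\pi_1^p(C), \ZZ/p)$ with the Artin--Schreier quotient $A/\wp(A)$ --- specifically, verifying that $f \mapsto \chi_f$ is injective modulo $\wp(A)$, for which normality of $A$ is essential, and checking that these covers really are unramified over all of $C$. Everything else is routine pole-order bookkeeping together with an application of Riemann--Roch. It is worth noting where the hypothesis $B \neq \emptyset$ enters: it guarantees a point $P$ at which functions in $A$ may have poles, hence the infinitude of prime-to-$p$ pole orders; when $B = \emptyset$ one has $A = k$ and $A/\wp(A) = 0$, consistent with the free pro-$p$ group of finite rank in Theorem~\ref{ds}.
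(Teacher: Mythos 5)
Your proof is correct, and it takes essentially the approach the paper itself indicates: the survey gives no argument beyond citing Shafarevich and remarking that the proof ``relies on cohomological arguments and Artin-Schreier theory,'' which is exactly your combination of the generator-counting criterion $\dim_{\FF_p}\mathrm{Hom}(\pi,\ZZ/p)$ (the same criterion quoted in the proof of Theorem \ref{ds}) with the Artin--Schreier identification of characters with $A/\wp(A)$. Your pole-order/Riemann--Roch argument showing $\dim_{\FF_p} A/\wp(A)=\infty$ (poles of $\wp(A)$ at $P\in B$ have order divisible by $p$, while prime-to-$p$ pole orders are realized for all large integers) correctly supplies the details the survey omits, including the needed normality of $\CO(C)$ to see that the kernel of $f\mapsto\chi_f$ is exactly $\wp(A)$.
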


The proof of Theorem \ref{shaf} relies on cohomological arguments and Artin-Schreier theory.  
Since the pro-$p$ fundamental group is a quotient of the fundamental group, Theorem \ref{shaf} 
implies that the fundamental group $\pi_1(C)$ is infinitely, not finitely, generated 
when $C$ is an affine $k$-curve.
Thus the fundamental group of an affine $k$-curve is not determined by its finite quotients  \cite[Prop.\ 15.4]{FJ}.
Moreover, as the $p$ and prime-to-$p$ parts are respectively infinitely and finitely generated as profinite groups, this shows $\pi_1(C)$ is not free and thus fact (i) is false.

\subsection{Abhyankar's Conjecture} \label{Sabhy}

Remarkably, the finite quotients of the fundamental group of every affine $k$-curve are known, even if it is not clear how they fit together.
The next result shows that a finite group $G$ occurs as a Galois group of a cover of $X$ with branch locus in $B$ 
if and only if the maximal prime-to-$p$ quotient of $G$ occurs as a Galois group of a cover of an arbitrary 
genus $g$ curve with $r$ branch points in characteristic $0$.

\begin{theorem}\label{AC}\cite{ab, har, ray}
Let $X$ be a projective $k$-curve of genus $g$ 
and let $B \subset X$ be a finite set of cardinality $r>0$.  
A finite group $G$ is a quotient of 
$\pi_1(X-B)$ if and only if every prime-to-$p$ quotient of $G$ can be generated by $2g+r-1$ elements.
\end{theorem}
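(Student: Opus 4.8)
The plan is to treat the two implications separately: necessity is short and rests on Theorem \ref{GR}, while sufficiency is the deep part and is assembled from the affine-line case of Raynaud together with Harbater's patching.

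For necessity, suppose $G$ is a quotient of $\pi_1(X-B)$ and let $\bar G = G/p(G)$ be its maximal prime-to-$p$ quotient, where $p(G)$ denotes the normal subgroup generated by the Sylow $p$-subgroups. One checks first that $G/p(G)$ really is the largest $p'$-quotient of $G$: the image of a Sylow $p$-subgroup in any $p'$-quotient is trivial, so every $p'$-quotient factors through $G/p(G)$. Hence the hypothesis on \emph{all} prime-to-$p$ quotients is equivalent to a condition on $\bar G$ alone. Since $\bar G$ is a further quotient of $\pi_1(X-B)$ and is prime-to-$p$, it factors through $\pi_1^{p'}(X-B)$. By Theorem \ref{GR} this group is isomorphic to $\pi_1^{p'}(\X-\B)$ for a characteristic-$0$ lift of the same genus $g$ and number $r$ of punctures, and in characteristic $0$ the fundamental group is free profinite on $2g+r-1$ generators when $r>0$. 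Thus $\bar G$, and therefore every prime-to-$p$ quotient of $G$, is generated by $2g+r-1$ elements.

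For sufficiency I would first isolate the central case $X=\PP_k$, $B=\{\infty\}$, where $2g+r-1=0$ and the hypothesis reduces to ``$G$ has trivial maximal prime-to-$p$ quotient,'' i.e.\ $G$ is quasi-$p$ (generated by its Sylow $p$-subgroups). This case is Raynaud's theorem: every quasi-$p$ group is a quotient of $\pi_1(\Aa_k)$. I would prove it by the method of stable reduction of covers, studying the stable model of a prospective $G$-cover of $\PP_k$ branched only at $\infty$ and using the auxiliary cover to tame the wild degeneration; an induction on $|G|$ that splits off a minimal normal quasi-$p$ piece and reassembles the covers supported on the components of the special fibre then produces the desired cover. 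This degeneration analysis --- controlling the wild ramification in the stable reduction and verifying that the local data glue --- is where essentially all the difficulty lies, and it is the main obstacle.

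Finally I would deduce the general statement from the affine-line case by Harbater's formal patching. Writing $N=p(G)$, the quotient $\bar G=G/N$ is a prime-to-$p$ group generated by $n=2g+r-1$ elements, so Theorem \ref{GR} (Grothendieck) furnishes a tame $\bar G$-Galois cover of $X$ branched in $B$. One then realizes the extension $1\to N\to G\to\bar G\to 1$ by patching, over formal discs around the branch points, quasi-$p$ covers of those discs supplied by the affine-line case, gluing them to the tame cover along the boundary annuli so that the local inertia and monodromy assemble into a global $G$-action. The remaining bookkeeping --- arranging that the patched local groups together generate $G$ and that the resulting cover is branched only in $B$ --- completes the construction.
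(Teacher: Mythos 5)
Your overall architecture coincides with the paper's: the forward direction is exactly the paper's (it follows from Theorem \ref{GR}, and your reduction of the hypothesis on all prime-to-$p$ quotients to the single maximal one $G/p(G)$ is correct), and your converse is organized the same way --- settle the affine line first, then obtain the general affine curve by formal patching, which is precisely the parenthetical reduction appearing in Case A of the paper's outline.

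The gap is in your sketch of the affine-line case, which is the heart of the theorem. You assert that stable reduction, together with ``an induction on $|G|$ that splits off a minimal normal quasi-$p$ piece,'' proves that every quasi-$p$ group is a quotient of $\pi_1(\Aa_k)$. That is not how the proof can go, for two reasons. First, the induction must begin with Serre's theorem (Theorem \ref{Tserre}), which disposes of normal solvable (in particular normal $p$-) subgroups by cohomological lifting; your sketch omits this step entirely, and without it even $p$-groups are unaccounted for, since they fall outside the reach of the degeneration argument. Second, and more seriously, semi-stable reduction does not handle all quasi-$p$ groups: it yields a \emph{connected} $G$-cover of a terminal projective line branched at one point only under the hypothesis that $G$ is \emph{not} generated by proper quasi-$p$ subgroups $G_i$ with $G_i \cap S$ a Sylow $p$-subgroup of $G_i$ (Case B in the paper); it is exactly this unusual group-theoretic condition that guarantees connectedness on some terminal component. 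When $G$ \emph{is} generated by two such subgroups (Case A), one instead invokes the inductive hypothesis for $G_1$ and $G_2$ and patches the two covers $\phi_1,\phi_2$ together along a degenerate curve --- so formal patching is needed \emph{inside} the affine-line induction itself, not only to pass from $\Aa_k$ to a general affine curve as in your last paragraph. Finally, your proposed induction on a ``minimal normal quasi-$p$ piece'' has no counterpart in the argument: after Serre's reduction there is no normal $p$-subgroup left to split off, and the operative dichotomy concerns generation by proper quasi-$p$ subgroups, not normal subgroups.
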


Theorem \ref{AC} was conjectured by Abhyankar in 1957, based on his experience working with covers of the affine line.
The proof was completed in 1993 by Raynaud and Harbater.
It is worth noting that the collection of groups which occur as Galois groups of affine $k$-curves is vast.
Not only does every finite $p$-group occur as a quotient of the fundamental group of every affine $k$-curve, 
but every finite simple group of order divisible by $p$ does as well.  
An immediate consequence of Theorem \ref{AC} is the following corollary.

\begin{corollary}
A finite group is a quotient of $\pi_1(\Aa_k)$ if and only if it has no nontrivial prime-to-$p$ quotient.
\end{corollary}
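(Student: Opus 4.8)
The plan is to specialize Theorem \ref{AC} to the case of the affine line and then simply unwind the resulting numerical condition. Since $\Aa_k = \PP_k - \{\infty\}$, I take $X = \PP_k$, so that the genus is $g = 0$, and I take $B = \{\infty\}$, so that $r = |B| = 1$. With these choices the generation bound appearing in Theorem \ref{AC} becomes $2g + r - 1 = 0$.

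Feeding this into Theorem \ref{AC} gives the following statement: a finite group $G$ is a quotient of $\pi_1(\Aa_k)$ if and only if every prime-to-$p$ quotient of $G$ can be generated by $0$ elements. The only group that can be generated by $0$ elements is the trivial group, so the right-hand condition reads: every prime-to-$p$ quotient of $G$ is trivial. The final step is to note that this is equivalent to the assertion that $G$ has no nontrivial prime-to-$p$ quotient. One implication is tautological; for the converse, observe that any prime-to-$p$ quotient $G/N$ is itself a quotient of the maximal prime-to-$p$ quotient of $G$, so if the latter is trivial then so is every prime-to-$p$ quotient. Combining these observations yields the corollary.

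There is no genuine obstacle in this argument: all of the depth resides in Theorem \ref{AC}, the Abhyankar conjecture, whose proof by Raynaud and Harbater constitutes the hard mathematics. The corollary is bookkeeping, and the only points requiring care are the boundary computation $2g + r - 1 = 0$ and the interpretation of ``generated by $0$ elements'' as ``trivial,'' which is exactly what forces every prime-to-$p$ quotient to vanish.
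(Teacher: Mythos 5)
Your proof is correct and follows exactly the route the paper intends: the paper states the corollary as ``an immediate consequence of Theorem \ref{AC},'' and the intended argument is precisely your specialization to $X=\PP_k$, $B=\{\infty\}$, giving $2g+r-1=0$ so that every prime-to-$p$ quotient must be generated by zero elements, hence trivial. Your unwinding of the bookkeeping (including the equivalence with having no nontrivial prime-to-$p$ quotient) is exactly what the paper leaves implicit.
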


A finite group with no nontrivial prime-to-$p$ quotient is called {\it quasi-$p$}.
Equivalently, a finite group is quasi-$p$ if it is generated by its elements of $p$-power order, or by its 
Sylow $p$-subgroups.

A first step in proving Theorem \ref{AC} is the following result of Serre.
In particular, it shows that every finite $p$-group occurs as a Galois group of a cover of the affine line.

\begin{theorem}\cite[Thm.\ 1]{serre} \label{Tserre}
Suppose $\tilde{G}$ is a finite quasi-$p$ group and $N \subset \tilde{G}$ is a normal subgroup which is solvable.
Let $G=\tilde{G}/N$.
If $G$ is a quotient of $\pi_1(\Aa_k)$ then so is $\tilde{G}$.
\end{theorem}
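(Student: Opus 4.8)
The plan is to recast the statement as an embedding problem and to solve it by induction on $|N|$. Fix a surjection $\phi\colon \pi_1(\Aa_k)\twoheadrightarrow G$. Since a $\tilde G$-Galois cover of $\Aa_k$ is the same datum as a surjection $\pi_1(\Aa_k)\twoheadrightarrow \tilde G$, the assertion that $\tilde G$ is a quotient of $\pi_1(\Aa_k)$ amounts to producing a surjection $\tilde\phi\colon \pi_1(\Aa_k)\twoheadrightarrow \tilde G$ whose composite with the quotient map $\tilde G\to \tilde G/N=G$ equals $\phi$. To set up the induction I would first perform a dévissage. As $N$ is a nontrivial solvable normal subgroup of $\tilde G$, it contains a minimal normal subgroup $V$ of $\tilde G$, and any such $V$ is elementary abelian, $V\simeq(\ZZ/\ell)^a$ for a single prime $\ell$, and is a simple $\FF_\ell[\tilde G]$-module under conjugation. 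Factoring $\tilde G\twoheadrightarrow \tilde G/V\twoheadrightarrow G$, the quotient $\tilde G/V$ is again quasi-$p$ (every quotient of a quasi-$p$ group is quasi-$p$) and has solvable normal subgroup $N/V$ with quotient $G$; since $|N/V|<|N|$, the inductive hypothesis shows $\tilde G/V$ is a quotient of $\pi_1(\Aa_k)$. It therefore suffices to treat the base case $N=V$, i.e.\ to solve a single embedding problem $1\to V\to\tilde G\to G\to 1$ with $V$ an $\FF_\ell$-vector space and $\tilde G$ quasi-$p$.

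I would then split into two cases according to whether $\ell=p$. When $\ell=p$ the kernel $V$ is a $p$-group and we face a $p$-embedding problem. The obstruction to lifting $\phi$ to a homomorphism $\tilde\phi\colon \pi_1(\Aa_k)\to\tilde G$ lies in $H^2(\pi_1(\Aa_k),V)$, and this vanishes because the $p$-cohomological dimension of $\pi_1(\Aa_k)$ is at most $1$ (concretely $H^2(\Aa_k,\ZZ/p)=0$, from affineness and the Artin--Schreier sequence). Hence a lift exists, and its image $H$ satisfies $HV=\tilde G$. To upgrade $H$ to all of $\tilde G$ I would use that the set of lifts is a torsor under $H^1(\pi_1(\Aa_k),V)$, which is infinite-dimensional over $\FF_p$ — exactly the affine phenomenon underlying Theorem~\ref{shaf} — so one can adjust the lift to surject onto $V$ as well, quasi-$p$-ness of $\tilde G$ ensuring the adjusted map is onto. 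Geometrically, the new layer is an Artin--Schreier--Witt extension built on the $G$-cover $Y\to\Aa_k$ corresponding to $\phi$, chosen so that the composite is a $\tilde G$-cover branched only over $\infty$.

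The substantial case, and the step I expect to be the main obstacle, is $\ell\neq p$. Now $V$ is prime-to-$p$, yet the cover $\tilde\phi$ we must build is forced to be \emph{wildly} ramified over $\infty$: by Theorem~\ref{GR} the affine line carries no nontrivial tame cover, so a tame kernel $V$ cannot be introduced ``for free,'' and its ramification must instead be absorbed into a cyclic-by-$p$ inertia group at $\infty$ strictly larger than the inertia of the given $G$-cover. The engine for this is an explicit construction: the minimal quasi-$p$ extensions of a $p$-cyclic group by $V$ — the groups $\Ga$ when $V\simeq(\ZZ/\ell)^a$ is the simple $\FF_\ell[\ZZ/p]$-module — are realized by explicit towers of Artin--Schreier and Kummer extensions branched only at $\infty$, built from the same ingredients as Section~\ref{Sexample}. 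The crux is then an amalgamation (patching) argument: one must glue such a wildly ramified $V$-tower onto the given $G$-cover so that (a) the inertia over $\infty$ grows in a controlled, cyclic-by-$p$ manner, (b) no branch point other than $\infty$ is created, and (c) the Galois group of the resulting cover is exactly $\tilde G$ and not a proper subgroup, the last point again invoking that $\tilde G$ is quasi-$p$. Forcing tame kernel data to be carried by \emph{wild} ramification while keeping the branch locus equal to $\{\infty\}$ is the heart of the difficulty, and is where Serre's construction — the first step toward Abhyankar's Conjecture (Theorem~\ref{AC}) — does its real work.
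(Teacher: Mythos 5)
Your dévissage is sound and matches the paper's first reduction: pass to a minimal normal subgroup $V\subset N$ of $\tilde G$, which is elementary abelian and irreducible as a $G$-module. Your $\ell=p$ case is also essentially the paper's argument (cohomological dimension $\leq 1$ gives a weak solution; the infinite-dimensionality of $H^1(\pi,V_\psi)$, $\pi=\pi_1(\Aa_k)$, upgrades it to a proper one). One repair there: what makes a suitably adjusted lift surjective is not quasi-$p$-ness of $\tilde G$ but irreducibility of $V$ --- a non-surjective lift has image $H$ with $HV=\tilde G$ and $H\cap V$ normal in $\tilde G$, hence trivial, so $H$ is a complement of $V$, and complements up to conjugacy are counted by the finite-dimensional $H^1(G,V)$, which $H^1(\pi,V_\psi)$ strictly exceeds.

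The genuine gap is the case $\ell\neq p$, which you assert rather than prove, and the route you sketch is not the paper's. First, the paper's dichotomy is split versus non-split (the class $e\in H^2(\tilde G, V)$ zero or not), not $\ell=p$ versus $\ell\neq p$: when $e\neq 0$, a weak solution exists for \emph{every} $\ell$ (the fundamental group of an affine $k$-curve has cohomological dimension at most $1$ at all primes), and the complement argument above shows it is automatically surjective, since non-splitness forbids complements. Your organization needlessly routes this easy subcase into the hard one. Second, in the genuinely hard subcase ($e=0$, $\ell\neq p$), Serre's argument is cohomological, not constructive: a surjective lift of $\psi$ exists if and only if $\dim_{\FF_\ell}H^1(\pi,V_\psi)>\dim_{\FF_\ell}H^1(G,V)$, and he evaluates the left side by the Grothendieck--Ogg--Shafarevich formula in terms of the higher ramification filtration at $\infty$ of the $G$-cover attached to $\psi$; when the inequality fails, he \emph{changes} $\psi$ (takes a $G$-cover with wilder ramification at $\infty$) to force it. Your substitute --- amalgamating explicit Kummer/Artin--Schreier towers onto the given $G$-cover subject to your conditions (a)--(c) --- leaves exactly those conditions unproven, and they are the whole content: you give no argument that such a gluing exists, creates no branch points outside $\infty$, and yields Galois group all of $\tilde G$ rather than a proper subgroup. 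Formal patching (Harbater's tool in Case A of Theorem \ref{AC}, not part of Serre's proof) requires expressing $\tilde G$ as generated by proper quasi-$p$ subgroups with compatible Sylow $p$-subgroups, a hypothesis your $\tilde G$ need not satisfy; its failure is precisely what forces Raynaud's Case B. So as written, the heart of the theorem is missing.
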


\begin{proof} (Outline following \cite{serre}.)
Let $\pi$ denote $\pi_1(\Aa_k)$.
By hypothesis, there exists a surjection $\psi: \pi \twoheadrightarrow G$.
Since $N$ is solvable, using representation theory, one can reduce to the case where $N$ is an elementary abelian group and the action of $G$ on $N$ is irreducible.  Then $\tilde{G}$ is an extension of $G$ by $N$, so this yields a cohomology class $e \in H^2(\tilde{G},N)$.  There are two cases: (a) when $e \neq 0$  and (b) when $e=0$.

In case (a) the theorem follows by using the fact that $\pi$ has cohomological dimension at most $1$.  This allows one to lift the surjection $\psi$ to a homomorphism 
$\tilde{\psi}: \pi \to \tilde{G}$, and thus it suffices to show that $\tilde{\psi}$ is surjective.  
The group $H = {\rm Im}(\tilde{\psi})$ is a subgroup of $\tilde{G}$ such that $N \cdot H = \tilde{G}$.  Because $e \neq 0$, $N \cap H$ is a non-trivial sub-$\tilde{G}/H$-module of $N$.  By the irreducibility of $N$, then $N \cap H = N$ and thus $H = \tilde{G}$ and $\tilde{\psi}$ is surjective.  

In case (b) when $e =0$, then $\tilde{G}$ is a semi-direct product.  A surjection $\psi:\pi \to G$
induces a $\pi$-module structure $N_\psi$ on $N$.  Then $\psi$ factors through 
a surjection $\tilde{\psi}:\pi \to \tilde{G}$ if and only if the \'etale cohomology group 
$H^1(\pi, N_\psi)$ is strictly larger than the cohomology group $H^1(G, N)$.
When $N$ is an elementary abelian $p$-group, then $H^1(\pi, N_\psi)$ has infinite dimension, 
which completes the proof.
When $N$ is an elementary abelian $\ell$-group for a prime $\ell \not = p$,
Serre uses the Grothendieck-Ogg-Shafarevich formula 
to calculate the dimension $H^1(\pi, N_\psi)$ in terms of the filtration of higher ramification groups. 
It is possible that the dimension is not large enough, 
in which case it is necessary to change $\psi$ to complete the proof.
\end{proof}

\begin{proof} (Outline of proof of Theorem \ref{AC} following \cite{har, ray}.)
The forward direction of Abhyankar's Conjecture follows from Theorem \ref{GR}.
Here is a sketch of the converse in the case that $X=\PP_k$ and $B=\{\infty\}$.
Suppose $G$ is a finite quasi-$p$ group.
The proof proceeds by induction on the order of $G$. 
By Theorem \ref{Tserre}, one can assume that $G$ has no normal $p$-group subgroup.
Let $S$ be a fixed Sylow $p$-subgroup of $G$.

{\bf Case A:} $G$ is generated by two proper quasi-$p$ subgroups $G_1$ and $G_2$
satisfying the extra condition that $G_i \cap S$ is a Sylow $p$-subgroup of $G_i$.
Inductively, one can suppose that $G_1$ and $G_2$ are each a quotient of $\pi_1(\Aa_k)$.
Then there exists a $G_i$-Galois cover $\phi_i:Y_i \to \PP_k$ branched only at $\infty$ for $i=1,2$.
Harbater's contribution to the proof was to develop a theory of formal patching and use it to patch the 
two covers $\phi_1$ and $\phi_2$ together.  
In this way, a $G$-Galois cover $\phi:Y \to \PP_k$ branched only
at $\infty$ is produced.  The basic idea is to build a $k[[t]]$-curve $W$ whose generic fibre is a projective line and whose special fibre is a chain of two projective lines $W_1$ and $W_2$ intersecting in exactly one ordinary double point.
One can construct a $G$-Galois cover of the special fibre such that its restriction to $W_i$ is 
${\rm Ind}_{G_i}^G (\phi_i)$.  The condition on the Sylow $p$-subgroups allows one to do this compatibly near the 
ordinary double point.  One shows that the cover can be deformed over $k[[t]]$ 
near the ordinary double point. 
Using formal patching, one can deform the cover of projective 
curves over $k[[t]]$.  (A similar technique with formal patching allows one to reduce the 
proof of the converse direction of Abhyankar's Conjecture
for an arbitrary affine $k$-curve to the case of the affine line.)

{\bf Case B:} $G$ is a finite quasi-$p$ group, with no normal $p$-subgroup, not generated by 
proper quasi-$p$ subgroups satisfying the above condition on their Sylow $p$-subgroups.  While the conditions seem
awkward, this turns out to be \emph{exactly} the case that can be handled by Raynaud's analysis of 
semi-stable reduction of covers.  The idea is to consider a $G$-Galois cover $\varphi$ 
of the projective line over ${\rm frac}(W(k))$ 
whose inertia groups are $p$-groups.  The cover $\varphi$ exists by Riemann's Existence Theorem 
since $G$ is quasi-$p$ and thus can be generated by elements of $p$-power order.    
The special fibre $\phi_s$ of the semi-stable reduction of $\varphi$ is a $G$-Galois cover 
of a tree of projective lines.  
The cover $\phi_s$ is inseparable exactly over the interior components of the tree.  Over each 
terminal component of the tree, $\phi_s$ is ramified only over the node $\eta$ 
at which the terminal component intersects the interior of the tree.  
This yields a cover of a projective line branched only at one point $\eta$.  The unusual 
group theoretic conditions insure that there is at least one terminal component over which the restriction of 
$\phi_s$ is connected.  
Thus there exists a $G$-Galois cover $\phi:Y \to \PP_k$ branched only at one point.
\end{proof}

\subsection{Anabelian results} \label{Sanab}

Theorem \ref{AC} implies that the fundamental group of an affine $k$-curve 
is an infinitely generated profinite group.
An interesting consequence of this result is that, for an affine $k$-curve $X-B$, 
the structure of $\pi_1(X-B)$ is not determined by its finite quotients.
This is different than the situation for projective $k$-curves or for curves 
defined over algebraically closed fields of characteristic $0$.

Grothendieck's anabelian conjecture predicts that the isomorphism class of a hyperbolic curve
(a smooth curve whose geometric fundamental group is nonabelian) defined over a number field 
is determined by the structure of its arithmetic fundamental group.
This conjecture has been settled in large part by Mochizuki \cite{Mo99}.
Specifically, let $K$ be a field that can be embedded in a finitely generated field extension of ${\QQ}_p$.
Let $X$ be a smooth $K$-variety and let $Y$ be a hyperbolic $K$-curve. 
Then there is a natural bijection between the set of dominant
$K$-morphisms $X \to Y$ and the set of (conjugacy classes of) open homomorphisms 
$\pi_1(X) \to \pi_1(Y)$ compatible with the action of the absolute Galois group $G_K$.
As an application, he proves a birational version of the anabelian conjecture for function fields
of arbitrary dimension over $K$.
The result builds upon the work of others, especially Tamagawa \cite{Ta97}, 
who introduced a characteristic $p$ version of the Grothendieck anabelian conjecture in characteristic $p$ 
and proved it for affine curves defined over finite fields.

Most anabelian theorems are too technical to include here, but
as a very special case, consider the following result.

\begin{theorem}\cite[Cor.\ 1.8, Thm.\ 1.9]{Ta} \label{Ttama}
Suppose $X$ is a smooth projective $k$-curve and $B \subset X$ is a finite set of points.
The fundamental group $\pi_1(X-B)$ determines the genus of $X$, the cardinality of $B$, 
and the $p$-rank of $X$. 
\end{theorem}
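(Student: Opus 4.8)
The plan is to recover each of the three numbers as a group-theoretic invariant of the abstract profinite group $\Pi := \pi_1(X-B)$, proceeding in two stages. First I would use the pro-$p$ quotient to decide whether $B$ is empty. By Theorem \ref{ds} the maximal pro-$p$ quotient $\Pi^p = \pi_1^p(X)$ is free pro-$p$ of finite rank when $B=\emptyset$, whereas by Theorem \ref{shaf} it is infinitely generated when $B\neq\emptyset$. Since the maximal pro-$p$ quotient and its minimal number of generators (equal to $\dim_{\FF_p}{\rm Hom}(\Pi,\ZZ/p)$, as recorded in the proof of Theorem \ref{ds}) are intrinsic to $\Pi$, this already distinguishes the projective case $r=0$ from the affine case $r>0$.

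In the projective case ($B=\emptyset$) all three invariants are then immediate. The cardinality is $r=0$. The $p$-rank is $s_X = \dim_{\FF_p}{\rm Hom}(\Pi,\ZZ/p)$, since by Theorem \ref{ds} this is the rank of the free pro-$p$ group $\pi_1^p(X)$. For the genus, I would fix any prime $\ell\neq p$; because $X$ is a $K(\pi,1)$ one has ${\rm Hom}(\Pi,\ZZ/\ell)\cong H^1(X,\ZZ/\ell)\cong J_X[\ell]\cong(\ZZ/\ell)^{2g}$ (the prime-to-$p$ comparison of Theorem \ref{GR} gives the same count), so $2g = \dim_{\FF_\ell}{\rm Hom}(\Pi,\ZZ/\ell)$.

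In the affine case the prime-to-$p$ abelianization still reads off the single number $N := 2g+r-1$, since $\dim_{\FF_\ell}{\rm Hom}(\Pi,\ZZ/\ell)=N$ for any $\ell\neq p$ by Theorem \ref{GR}. The difficulty is that $N$ alone does not separate $g$ from $r$: by Grothendieck's comparison the quotient $\Pi^{p'}$ is \emph{free} of rank $N$, so it retains no further information (its higher cohomology vanishes, and the Nielsen--Schreier rank of every open subgroup depends only on $N$ and the index). To break this degeneracy I would identify, purely group-theoretically, the inertia at the punctures: the closed normal subgroup $I\trianglelefteq\Pi$ generated by all inertia subgroups at points of $B$ has the property that $\Pi/I\cong\pi_1(X)$, the fundamental group of the projective completion, because killing inertia is exactly the condition that a cover of $X-B$ extend to an \'etale cover of $X$. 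Once $I$ is located, applying the projective case to $\Pi/I$ yields $g$ and $s_X$, and then $r = N-2g+1$.

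The main obstacle is precisely the group-theoretic characterization of $I$; this is the anabelian heart of the argument and Tamagawa's central contribution. It cannot be carried out inside the finitely generated prime-to-$p$ quotient, which is free and forgets the cusps, nor by a cup-product pairing (the curve being affine, the relevant $H^2$ vanishes). The essential new input in characteristic $p$ is that the \emph{infinitely} generated pro-$p$ part is sensitive to wild ramification at the punctures (Artin--Schreier theory, as in Theorem \ref{shaf}), so that the interaction between the pro-$p$ and prime-to-$p$ parts across a cofinal tower of covers detects the presence of a puncture. Tamagawa's delicate analysis of how the abelianized fundamental groups of such covers grow is what isolates the inertia subgroups, and hence the quotient $\pi_1(X)$, making the three invariants simultaneously recoverable.
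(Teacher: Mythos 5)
Your first two stages match the paper exactly: deciding $r=0$ versus $r>0$ from the (in)finite generation of the maximal pro-$p$ quotient (Theorems \ref{ds} and \ref{shaf}), handling the projective case via the rank of the free pro-$p$ quotient and the maximal elementary abelian $\ell$-quotient, and reading $2g_X-2+r_X$ off the prime-to-$p$ part (Theorem \ref{GR}) in the affine case. But in the affine case your argument stops precisely where a proof is required. You reduce the theorem to the assertion that the closed normal subgroup $I$ generated by the inertia groups at the punctures (equivalently, the quotient $\Pi/I\cong\pi_1(X)$) can be recognized group-theoretically inside $\Pi$, and then you defer that recognition to ``Tamagawa's delicate analysis'' as a black box. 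No mechanism for it is given, and it is in fact a much stronger statement than the one being proved; as written, the unproved step is the entire content of the theorem in the affine case.

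The paper's proof shows this heavy step can be avoided altogether: inertia subgroups are never localized. Instead, for every open subgroup $H\subset\Pi$ one computes $2g_H-2+r_H$ (since $H$ is itself the fundamental group of the corresponding cover $U_H$, its prime-to-$p$ part gives this number by Theorem \ref{GR}); comparing this with the tame Riemann-Hurwitz count detects whether $\phi_H$ is wildly ramified, which recovers the tame fundamental group $\pi_1^t(X-B)$ and hence the $p$-ranks $s_X$ and $s_H$ as ranks of maximal pro-$p$ quotients of tame parts. Then the Deuring-Shafarevich formula $s_H-1+r_H=p(s_X-1+r_X)$, applied to open normal subgroups $H$ of index $p$, makes the quantity $pr_X-r_H$ computable from the group; and Riemann-Roch furnishes a function $f$ with pole divisor supported exactly on $B$ and prime-to-$p$ pole orders, so that the Artin-Schreier cover $y^p-y=f$ is totally ramified over every point of $B$ and realizes the extremal value $(p-1)r_X$ of $pr_X-r_H$. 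This pins down $r_X$, and then $g_X$ follows from $2g_X-2+r_X$. So the ``anabelian heart'' you appeal to is bypassed by purely numerical invariants of open subgroups; your proposal would only become a proof if you actually supplied the characterization of $I$, which is substantially harder than the statement at hand.
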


\begin{proof} (Outline following \cite{Ta}.)
The fundamental group $\pi_1(X-B)$ determines the pro-$p$ and the 
prime-to-$p$ fundamental groups.    
By taking the quotient by the commutator, one can determine the abelianization $\pi_1^{\rm ab}(X-B)$ 
of $\pi_1(X-B)$.
If $H$ is an open subgroup of $\pi_1(X-B)$, let $\phi_H: U_H \to X-B$ denote the corresponding cover. 
The degree of $\phi_H$ equals the index $[\pi_1(X-B):H]$ and the Galois group of 
$\phi_H$ is abelian if and only if $H$ contains the commutator. 
Thus these attributes of $\phi_H$ are determined by the fundamental group.

Let $g_X$ be the genus of $X$, let $r_X=\#B$, and let $s_X$ be the $p$-rank of $X$.  
By Theorems \ref{ds} and \ref{shaf}, the structure of $\pi_1^p(X-B)$
determines whether $r_X=0$.  
If $r_X=0$, then the pro-$p$ fundamental group determines the $p$-rank $s_X$ by Theorem \ref{ds}.  
Consider the rank of the maximal elementary abelian $\ell$-group quotient of the abelianization 
$\pi_1^{\rm ab}(X)$ for a prime $\ell \not = p$.
By Section \ref{JacCov}, this is the rank of $J_X[\ell]$, which equals $2g_X$ by Section \ref{Sjacobian}.

Now suppose $r_X >0$.
By Theorem \ref{GR}, the fundamental group determines the quantity $2g_X -2 +r_X$.
Suppose $H$ is an open subgroup of $\pi_1(X-B)$.
The corresponding cover $\phi_H: U_H \to X-B$ has degree equal to the index $[\pi_1(X-B):H]$.  
Note that $H$ is the fundamental group of $\pi_1(U_H)$ and that 
$U_H$ is an open subset of a smooth projective $k$-curve $Y_H$ of genus $g_H$.  
Let $r_H=\# (Y_H-U_H)$.  
It follows that the quantity $2g_H-2+r_H$ can be determined for any open subgroup $H \subset \pi_1(X-B)$.    
Applying the Riemann-Hurwitz formula, one can determine whether $\phi_H$ is wildly ramified.  Thus 
$\pi_1(X-B)$ determines the tame fundamental group.
Now $s_X$ equals the rank of the maximal pro-$p$ quotient of $\pi_1^t(X-B)$.
Similarly, one can determine the $p$-rank $s_H$ of $Y_H$ for any open subgroup $H \subset \pi_1(X-B)$.

Continuing in the case when $r_X >0$, the Deuring-Shafarevich formula 
states that $s_H-1+r_H=p(s_X-1+r_X)$ for any open normal subgroup $H \subset \pi_1(X-B)$ of index $p$.
Thus one can determine the quantity $pr_X-r_H$ which is a multiple of $p-1$. 
By the Riemann-Roch Theorem, there is a function $f \in k(X)$
whose set of poles is $B$ and such that $p \nmid {\rm ord}_b(f)$ for each $b \in B$.
The Artin-Schreier equation $y^p-y=f$ determines a Galois degree $p$ cover $\phi:Y \to X$ which is totally 
ramified above each $b \in B$.  
If $H \subset \pi_1(X-B)$ is the corresponding open normal subgroup, then $r_{H}=r_X$ and 
$pr_X-r_H$ equals $(p-1)r_X$.
Since no smaller value of $pr_X-r_H$ can occur for any 
open normal subgroup $H \subset \pi_1(X-B)$ of index $p$,
the fundamental group determines the value of $r_X$ and thus of $g_X$ as well.
\end{proof}

Theorem \ref{Ttama} finally provides a counterexample to fact (ii) for affine $k$-curves.  
An element $b \in k-\{0,1\}$ is called supersingular if the elliptic curve $E$ with equation 
$y^2=x(x-1)(x-b)$ is supersingular.
By a result of Igusa \cite[V, Thm.\ 4.1(c)]{sil}, there are $(p-1)/2$ supersingular values of $b$
if $p$ is odd.

\begin{proposition} \label{Psupsing} Suppose $p$ is an odd prime.  Let $X=\PP_k$ and $B=\{0,1,b,\infty\}$.
The structure of the fundamental group $\pi_1(X-B)$ depends on whether $b$ is a supersingular value.
\end{proposition}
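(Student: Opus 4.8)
The plan is to exhibit a purely group-theoretic invariant of $\pi_1(X-B)$ that detects the $p$-rank of the elliptic curve $E\colon y^2=x(x-1)(x-b)$, and hence detects whether $b$ is supersingular. Since $p$ is odd, every $\ZZ/2$-Galois cover of $\PP_k$ branched in $B$ is tame, so by Theorem~\ref{GR} the prime-to-$p$ fundamental group is free profinite on $2g+r-1=3$ generators and the index-two open subgroups of $\pi_1(X-B)$ are in bijection with the nonzero elements of $\mathrm{Hom}(\pi_1(X-B),\ZZ/2)\cong(\ZZ/2)^3$. Concretely, writing $\gamma_0,\gamma_1,\gamma_b,\gamma_\infty$ for the canonical generators subject to $\gamma_0\gamma_1\gamma_b\gamma_\infty=1$, each such subgroup corresponds to a double cover branched precisely over the even-cardinality subset $S\subseteq B$ on which the homomorphism is nontrivial. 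There are exactly seven of these.

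Next I would compute the genus of each of these seven covers by Riemann--Hurwitz: a double cover of $\PP_k$ branched at $m$ points has genus $(m-2)/2$. Hence the six covers branched at a pair of points have genus $0$, while the single cover branched at all of $B=\{0,1,b,\infty\}$ has genus $1$; this last cover is exactly $E$. Thus among the seven index-two subgroups there is a \emph{unique} one, call it $H_0$, whose associated smooth projective curve $Y_{H_0}$ has genus one, and $Y_{H_0}\cong E$.

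The crux is that all of this data is intrinsic to the abstract profinite group. Indeed, $H_0$ is itself the fundamental group of an affine curve $U_{H_0}$ whose smooth projective model is $Y_{H_0}$, so Theorem~\ref{Ttama} applied to $H_0$ shows that the isomorphism class of $H_0$ determines both the genus and the $p$-rank of $Y_{H_0}$. Consequently $\pi_1(X-B)$ singles out $H_0$ as the unique index-two subgroup with $g_{H_0}=1$, and then recovers the $p$-rank $s_{H_0}=s_E$. Since $s_E=0$ exactly when $E$ is supersingular and $s_E=1$ when $E$ is ordinary, the group $\pi_1(X-B)$ determines whether $b$ is a supersingular value.

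To conclude, suppose $b$ is supersingular and $b'$ is ordinary; by Igusa's result \cite[V, Thm.\ 4.1(c)]{sil} both kinds of value exist for odd $p$ (there are $(p-1)/2$ supersingular values, and infinitely many ordinary ones since $k$ is algebraically closed). Any isomorphism $\pi_1(\PP_k-\{0,1,b,\infty\})\to\pi_1(\PP_k-\{0,1,b',\infty\})$ would carry $H_0$ to the corresponding unique genus-one subgroup for $b'$, forcing $s_E=s_{E'}$; but $s_E=0\neq1=s_{E'}$. Hence the two fundamental groups are non-isomorphic, and the structure of $\pi_1(X-B)$ genuinely depends on whether $b$ is supersingular. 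The main obstacle is the last point---that the $p$-rank of the projective closure of a cover is a group-theoretic invariant---which is precisely the content of Theorem~\ref{Ttama}; the remaining step identifying the genus-one cover is routine Riemann--Hurwitz bookkeeping.
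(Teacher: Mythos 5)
Your proposal is correct and takes essentially the same route as the paper's own proof: both count the seven index-two subgroups, single out the unique one whose associated smooth projective curve has positive genus (namely $E\colon y^2=x(x-1)(x-b)$), and then apply Theorem~\ref{Ttama} to that subgroup to recover the $p$-rank of $E$ and hence whether $b$ is supersingular. Your write-up merely makes explicit the Riemann--Hurwitz genus bookkeeping and the concluding non-isomorphism argument that the paper leaves implicit.
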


Note that the finite quotients of $\pi_1(X-B)$ in Proposition \ref{Psupsing} 
do not depend on whether $b$ is a supersingular value by Theorem \ref{AC}.

\begin{proof}
The branch locus of a degree $2$ cover of $\PP_k$ has even cardinality, so there are $7$ subgroups of $\pi_1(X-B)$ of index two.
Of these, exactly one corresponds to a degree two cover $\phi: Y \to \PP_k$ with branch locus in $B$ 
such that $Y$ has positive genus, namely the cover with equation $y^2=x(x-1)(x-b)$.  
By Theorem \ref{Ttama}, one can distinguish the corresponding subgroup $\pi' \subset \pi_1(X-B)$ of index two.
Now $\pi'$ is the fundamental group of $Y-\phi^{-1}(B)$.
Applying Theorem \ref{Ttama} again, one can determine the $p$-rank of $Y$, and in particular determine 
whether $Y$ is ordinary or supersingular.
This determines whether $b$ is a supersingular value.
\end{proof}

For more results along the lines of Proposition \ref{Psupsing}, see \cite{Bo01}, \cite{Ta}.  

\subsection{Freeness results and embedding problems}\label{freeness}

To understand the structure of the fundamental group of a $k$-curve,
it is crucial to understand how its finite quotients fit together.
Harbater and Pop independently proved the following result.

\begin{theorem} \label{Tfree} \cite[Thm.\ 3.5]{har95} \cite[Thm.\ B]{pop}  
The absolute Galois group $G_{k(X)}$ of the function field of a projective $k$-curve $X$ is free of rank 
${\rm card}(k)$.  
\end{theorem}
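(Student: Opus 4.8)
The plan is to invoke the cohomological-plus-embedding-problem characterization of free profinite groups of infinite rank: for an infinite cardinal $m$, a profinite group $\Pi$ is free of rank $m$ precisely when it is projective (equivalently, has cohomological dimension at most $1$) and every nontrivial finite embedding problem for $\Pi$ has exactly $m$ proper (surjective) solutions. Setting $\Pi = G_{k(X)}$ and $m = {\rm card}(k)$, I would establish the three ingredients in turn: a bound on the rank, projectivity, and the proper solvability of all finite embedding problems with the correct number of solutions.

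The first two are the soft inputs. Since $k$ is algebraically closed it is infinite, and $k(X)$ is a field of cardinality ${\rm card}(k)$; hence $k(X)$ has ${\rm card}(k)$ finite separable extensions inside a fixed separable closure, so the weight of $G_{k(X)}$ is at most ${\rm card}(k)$ (the final step forces equality). For projectivity, I would use that the absolute Galois group of the function field of a curve over an algebraically closed field has cohomological dimension at most $1$; since profinite groups of cohomological dimension $\leq 1$ are exactly the projective ones, $G_{k(X)}$ is projective. In particular every finite embedding problem for $G_{k(X)}$ has at least a weak solution, and because $G_{k(X)}$ is projective it suffices to produce proper solutions to \emph{split} embedding problems, i.e. those of the form $\Gamma = N \rtimes G$.

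The heart of the argument --- and the main obstacle --- is the geometric construction, for each split embedding problem, of ${\rm card}(k)$ distinct proper solutions; this is exactly the input provided by Harbater's formal patching and Pop's rigid patching. A surjection $G_{k(X)} \twoheadrightarrow G$ corresponds to a (possibly ramified) $G$-Galois cover $Y \to X$, and a proper solution is a connected $\Gamma$-Galois cover $Z \to X$ whose quotient by $N$ recovers $Y \to X$. Crucially, we work with the \emph{full} absolute Galois group, so ramification is unconstrained; I would exploit this by choosing a fresh branch point, say a $k$-rational point $x_0 \in X$ outside the branch locus of $Y$, realizing $N$ by a connected $N$-Galois cover of a formal (or rigid) neighborhood of $x_0$, and patching this local cover against the $\Gamma$-cover induced from $Y$ along $G \hookrightarrow \Gamma$ over the complement of $x_0$. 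The patching theorem guarantees that these compatible covers glue to a connected $\Gamma$-Galois cover $Z \to X$ dominating $Y$. Because $X$ has ${\rm card}(k)$ closed points available as $x_0$, and the local datum near $x_0$ may itself be varied, this yields ${\rm card}(k)$ pairwise non-isomorphic proper solutions.

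With projectivity in hand and every nontrivial finite embedding problem shown to admit exactly ${\rm card}(k)$ proper solutions, the characterization theorem lets me conclude that $G_{k(X)}$ is free of rank ${\rm card}(k)$, which also confirms that its weight is exactly ${\rm card}(k)$. All of the genuine difficulty sits in the patching step: one must control each cover along the boundary annulus of the neighborhood of $x_0$ so that the local $N$-cover and the globally induced $\Gamma$-cover agree on the overlap, and it is precisely the patching formalism --- formal schemes in Harbater's approach, rigid-analytic GAGA in Pop's --- that legitimizes this gluing and produces a genuine algebraic $\Gamma$-Galois cover.
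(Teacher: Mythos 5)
Your proposal follows essentially the same route as the paper's outline: both reduce freeness to the embedding-problem criterion (projectivity plus ${\rm card}(k)$ proper solutions), both obtain projectivity from the cohomological dimension $\leq 1$ of the relevant Galois/fundamental group, and both use Harbater--Pop patching to turn the induced (possibly disconnected) $\Gamma$-cover into a connected proper solution, exploiting exactly the key point that extra branch points are permitted when working with the full group $G_{k(X)}$ rather than $\pi_1(X-B)$. The only cosmetic difference is that you reduce to split embedding problems via the projectivity criterion, while the paper produces a weak solution to the general embedding problem first; both formulations of the freeness criterion appear in the paper itself.
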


Before describing the proof of Theorem \ref{Tfree},
it is useful to introduce the terminology of embedding problems.  
Suppose there exists a $G$-Galois cover $\phi:Y \to X$ with branch locus in $B$ corresponding to a surjection $\beta: \pi_1(X-B) \twoheadrightarrow G$ and a surjection $\alpha: \Gamma \twoheadrightarrow G$ of finite groups. 
By Galois theory, a surjection $\lambda: \pi_1(X-B) \twoheadrightarrow \Gamma$ where $\alpha \circ \lambda = \beta$ corresponds to a (connected) $\Gamma$-Galois cover $Z \to X$ with branch locus in $B$ that dominates $\phi$. 

Given a group $\Pi$, a pair of surjections $(\beta: \Pi \twoheadrightarrow G, \alpha: \Gamma \twoheadrightarrow G)$  where $G$ and $\Gamma$ are finite groups is a {\it finite embedding problem} for $\Pi$.
In the case where $\alpha$ has a splitting $s: G \to \Gamma$, the pair is a 
{\it finite split embedding problem}.
A {\it weak solution} to an embedding problem $(\beta, \alpha)$ is a group homomorphism 
$\lambda: \Pi \to \Gamma$ such that $\alpha \circ \lambda = \beta$.
A weak solution $\lambda$ is a {\it proper solution} if it is a surjection.
A group $\Pi$ is {\it projective} if every finite embedding problem for $\Pi$ has a weak solution.
Notice that any finite split embedding problem automatically has a weak solution given by $s \circ \beta$.

Solutions to embedding problems are tightly connected to the property of freeness.
For example, given a profinite group $\Pi$ and an infinite cardinal $m$,  
%a series of results by Iwasaw, Melnikov, and Chatzidakis 
by \cite[Chapter 8]{rz}, $\Pi$ is free of rank $m$ if and only if 
every finite embedding problem has exactly $m$ distinct proper solutions.  
Moreover, by \cite[Thm.\ 2.1]{HS05} this is equivalent to $\Pi$ being projective and satisfying the property that every non-trival finite split embedding problem for $\Pi$ has exactly $m$ distinct solutions.  
%A group $\Pi$ satisfying the latter condition is called $m$-quasi-free.

\begin{proof} (Outline following \cite{har95, pop}.) 
Let $(\beta: G_{k(X)} \twoheadrightarrow G, \alpha: \Gamma \twoheadrightarrow G)$  
be a finite embedding problem for $G_{k(X)}$ and let $N$ be the kernel of $\alpha$. 
It suffices to show that $(\beta, \alpha)$ has ${\rm card}(k)$ distinct proper solutions.  
The surjection $\beta$ corresponds to a $G$-Galois cover $\phi:Y \to X$ whose branch locus is contained in a non-empty set $B \subset X$. 
Thus there is an induced embedding problem 
$(\beta: \pi_1(X-B) \twoheadrightarrow G, \alpha: \Gamma \twoheadrightarrow G)$. 
As in Theorem \ref{Tserre}, representation theory implies that obstructions 
for weak solutions lie in $H^2(\pi_1(X-B, N)$.  
By \cite[Prop.\ 1]{serre} the fundamental group of an affine 
$k$-curve has cohomological dimension at most $1$, 
and hence it is a projective group \cite[I.5.9, Prop.\ 45]{serregc}.
As a result, there exists a weak solution $\lambda: \pi_1(X-B) \to \Gamma$.
This defines a (possibly disconnected) $\Gamma$-Galois cover $\psi': Z' \to X$ with branch locus in $B$.  
The cover $\psi'$ can be patched with a branched $N$-Galois cover $Z'' \to \PP_k$ 
in such a way as to produce a proper solution.  
Specifically, the patched cover produces a (connected) $\Gamma$-Galois cover 
$\psi:Z \to X$ dominating $\phi$ which is unramified away from a finite set $B'$ containing $B$.
\end{proof}

In the proof above, the additional branching of the $\Gamma$-Galois cover $\psi$ at $B'-B$ 
is not a problem as $\psi$ still corresponds to a surjection $\lambda:G_{k(X)} \twoheadrightarrow \Gamma$.  
It is useful to remark here that if the kernel $N$ is a quasi-$p$ group then no additional ramification is required (i.e., $B' = B$) \cite[Thm.\ 4.6]{har03} because the wild ramification can be enlarged at one point.  
In general, the number of additional branch points 
depends on the number of generators of the maximal prime-to-$p$ quotient of $N$.
Moreover, the location of the additional branching cannot be prescribed.
As a result, the freeness result Theorem \ref{Tfree} for $G_{k(X)}$ does not translate into a freeness result for $\pi_1(X-B)$ because in the latter case the covers cannot have additional ramification outside of $B$.
It is still true that $\pi_1(X-B)$ has cohomological dimension at most 1, and thus it is projective.  
However, as we saw at the end of Section \ref{pro-p}, it is not free. 

\section{Open questions and results}\label{Sopen}

Let $k$ be an algebraically closed field of characteristic $p >0$.
At this time, the full structure of the fundamental group is not known for any affine $k$-curve or 
for any projective curve of genus $g \geq 2$.
The fundamental group depends on towers of covers of $k$-curves and on the geometry of the 
$k$-curves in these towers.  
The goal of understanding fundamental groups 
provides a strong motivation to answer new questions about these towers.  
Let $X$ be a smooth connected projective $k$-curve of genus $g$.  Let $B \subset X$ be a finite set of points. 

\subsection{Subgroups of fundamental groups of curves}

It is interesting to measure the extent to which the fundamental group of a $k$-curve is not free.  
In this section, we study this topic in terms of subgroups of the fundamental group 
and in the next section we will address this topic in terms of quotients of the fundamental group and 
embedding problems.

\begin{question} \label{Qfree}
If $X-B$ is an affine $k$-curve,
which closed normal subgroups of $\pi_1(X-B)$ are free?
\end{question}

For example, the commutator subgroup of $\pi_1(X-B)$ is free \cite[Thm.\ 6.12]{ku} for every affine $k$-curve $X-B$.
This is a natural subgroup to study for this question since the quotient of $\pi_1(X-B)$ by the commutator subgroup is the maximal abelian quotient of $\pi_1(X-B)$.
Additional examples of a similar type can be seen in \cite[Thm.\ 1.1]{psz}.  

Here is an example of an affine $k$-curve $X-B$ and a closed normal subgroup of $\pi_1(X-B)$ that is not free.

\begin{example}  Consider the affine line $\Aa_k$. 
Let $N$ be the intersection of all open normal subgroups of $\pi_1(\Aa_k)$ of index $p$ such that,
for the corresponding cover $Y \to \PP_k$ branched only at $\infty$, the curve $Y$ has genus zero.  
These subgroups correspond to Artin-Schreier covers $y^p-y = cx$ with $c \in k$.  
A computation shows that two such covers are linearly disjoint as long as $c_1-c_2$ 
is not a $(p-1)^{st}$ root of unity.
Thus $N$ has infinite index and it is a closed normal subgroup of $\pi_1(\Aa_k)$.

Let $k(x)_{\infty}$ be the maximal Galois extension of the function field $k(x)$ 
which is unramified outside $\{\infty\}$.
Then ${\rm Gal}(k(x)_{\infty}/k(x)) = \pi_1(\Aa_k)$.
Let $F_N$ be the fixed field of $N$ in $k(x)_{\infty}$.

Assume that $N$ is free of (possibly infinite) rank $r$.  
Then, for any finite group $G$ with at most $r$ generators, 
there would exist a surjection $\beta: N \twoheadrightarrow G$.
Such a surjection would correspond to a $G$-Galois field extension $\CL_{\beta}/F_N$.
Since $G$ is finite, this extension and its Galois action 
would be defined by a finite set $S$ of polynomials with coefficients in $F_N$.  
Thus $S$ would be defined over some field $E_{\beta}$ where $k(x) \subset E_{\beta} \subset F_N$ 
and where $E_{\beta}/k(x)$ has finite degree.
%Let $\tilde{E}_{\beta}$ be the Galois closure of $E_{\beta}$ over $k(x)$.
The Galois group of $k(x)_\infty$ over $E_{\beta}$ is an open subgroup $N_{\beta}$ of $\pi_1(\Aa_k)$.
Since $E_{\beta} \subset F_N$, one sees that $N \subset N_{\beta}$. 
Moreover, there exists a $G$-Galois extension $L_{\beta}/E_{\beta}$ such that $L_{\beta} \subset k(x)_{\infty}$ and $L_{\beta} \cdot F_N = \CL_{\beta}$. 
%(This can also be seen by studying the open normal subgroups of $\pi_1(\Aa_k)$, cf.\ \cite[Lemma 8.3.8]{rz}.)
This process is called ``descending'' the extension.

Another computation shows that the fiber product of two linearly disjoint covers 
$y^p-y =c_1x$ and $y^p-y = c_2x$ yields a cover $Y \to \PP_k$ totally ramified over $\infty$ where $Y$ again 
has genus $0$.
Thus, for any open normal subgroup of $\pi_1(\Aa_k)$ containing $N$, 
the corresponding cover of $\PP_k$ has genus zero and is totally ramified over $\infty$.  
In particular, consider the (not necessarily Galois) cover $U_\beta \to \PP_k$ 
corresponding to the subgroup $N_\beta$.  
Then $U_{\beta}$ has genus $0$ and the fibre of $U_\beta$ over $\infty$ consists of one point $P_\infty$.
Since $E_{\beta}$ is the function field of $U_\beta$, 
the existence of a $G$-Galois extension $L_{\beta}/E_{\beta}$ with $L_{\beta} \subset k(x)_{\infty}$ 
implies that there exists a $G$-Galois cover $V \to U_{\beta}$ branched only at $P_\infty$. 
Choosing $G$ to be prime-to-$p$ and generated by $r \geq 1$ elements, 
this leads to a contradiction with the fact that $\pi^{p'}_1(\Aa_k)$ is trivial.
\end{example}

\subsection{Quotients of fundamental groups of curves}

Another approach to understanding the fundamental group is to study how its finite quotients fit together 
by solving embedding problems. 
This topic is especially important for the fundamental group of a $k$-curve $X$ which is projective. 
The reason is that, when $X$ is projective, then $\pi_1(X)=\pi_1^t(X)$ 
and so Theorem \ref{GR} implies that $\pi_1(X)$ is finitely generated as a profinite group.  
As such, by \cite[Prop.\ 15.4]{FJ}, it is determined by its finite quotients
(i.e., it is determined by the answer to question (1) from Section \ref{fundgrp}). 
Unfortunately, Abhyankar's Conjecture (Theorem \ref{AC}) does not apply to projective curves and
for $g \geq 2$, the finite quotients of $\pi_1(X)$ are unknown.
However, by Theorems \ref{GR} and \ref{ds} 
the maximal prime-to-$p$ and pro-$p$ quotients of the fundamental group of every projective $k$-curve are known.
Thus the question becomes, how do these prime-to-$p$ and pro-$p$ quotients fit together? 

A first step is to determine which finite groups $G$ having a normal 
$p$-Sylow subgroup $P$ occur as a quotient of $\pi_1(X)$.  
Such a quotient corresponds to an unramified $G$-Galois cover $Z \to X$ which factors as $Z \to Y \to X$ where $\Gal(Z/Y)=P$ and $\Gal(Y/X)$ is the prime-to-$p$ group $H = G/P$.  
In \cite[Thm.\ 7.5]{ps}, a necessary and sufficient condition is given for such $G$-Galois covers of $X$ to occur. The result essentially says that the $H$-module structure of $P$ must be compatible with the 
$H$-module structure of $J_Y[p]$ for some $H$-Galois cover $Y \to X$.  
The compatibility is measured in terms of a generalization of the $p$-rank called the 
Hasse-Witt invariants \cite[Section 2]{ruck} of $Y$.
Since $H$ is prime-to-$p$ and the prime-to-$p$ quotients of $\pi_1(X)$ are known, 
this result gives insight into the structure of $\pi_1(X)$.
The result was extended by Borne \cite[Thm.\ 1.1]{borne} to the case where $|H|$ 
is not necessarily prime-to-$p$.  
The proof in that case uses modular representation theory.

Nevertheless, the structure of $\pi_1(X)$ and its finite quotients are still unknown when $g \geq 2$.  
A complete analysis of this problem seems beyond reach for now.  
The results in \cite{borne} and \cite{ps} give conditions to solve the embedding problems when the kernel is a $p$-group. Thus, there is a natural question to ask next. 

\begin{question} \label{Qproj}
Given a projective curve $X$ and an embedding problem 
$(\beta: \pi_1(X) \twoheadrightarrow G, \alpha: \Gamma \twoheadrightarrow G)$ with $|{\rm ker}(\alpha)|$ prime-to-$p$, what conditions on $\Gamma$ and $X$ will ensure the existence of a proper solution?
\end{question}

\subsection{Ramification of covers of curves}

Given $X$, $B$, and $G$, only in special cases is it known what ramification data can occur 
for $G$-Galois covers $\phi:Y \to X$ with branch locus in $B$.
Answering this question is necessary to determine which values will occur for the genus of $Y$.
This is important for the goal of understanding the fundamental group $\pi_1(X)$, 
because the finite quotients of $\pi_1(Y)$ will depend on invariants like the genus or the $p$-rank of $Y$. 

This topic is most interesting for the case of wildly ramified covers of affine $k$-curves
because, in this case, there is the extra structure of the filtration of higher ramification groups to consider.
One result is that a cover can always be deformed using formal patching 
to lengthen the filtration of higher ramification groups at a wildly ramified point.  
Since the degree of the different depends on 
the ramification filtration, this leads to the following result.

\begin{theorem} \cite[Cor.\ 3.4]{Pr:genus}
Suppose $X-B$ is an affine $k$-curve and $G$ is a finite quotient of $\pi_1(X-B)$ 
such that $p$ divides $|G|$.  Let $N \in \NN$.
Then there exists a $G$-Galois cover $\phi:Y \to X$ with branch locus in $B$ such that the 
genus of $Y$ is greater than $N$.
\end{theorem}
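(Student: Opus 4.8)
The plan is to reduce the statement to the single phenomenon that wild ramification can be made arbitrarily heavy: first realize $G$ by a cover that is wildly ramified at one point, then deform that cover so that a break in its filtration of higher ramification groups at that point becomes as large as desired, and finally read off the growth of the genus from the Riemann--Hurwitz formula. The toy model to keep in mind is the Artin--Schreier cover of Equation~\ref{Eartsch}: the cover $y^p-y=f(x)$ with $p\nmid d=\deg(f)$ is totally wildly ramified over $\infty$ with $I_i(Q)=\ZZ/p$ for $0\le i\le d$, so its genus $(p-1)(d-1)/2$ grows without bound as $d\to\infty$. The content of the proof is that this mechanism persists for an arbitrary finite quotient $G$ with $p\mid|G|$.

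First I would produce a $G$-Galois cover $\phi_0\colon Y_0\to X$ with branch locus in $B$ that is wildly ramified at some point $Q_0$ lying over a point $b_0\in B$. The existence of \emph{some} $G$-Galois cover with branch locus in $B$ is guaranteed by Theorem~\ref{AC}; the hypothesis $p\mid|G|$ is exactly what lets one arrange a wildly ramified point, since a Sylow $p$-subgroup of $G$ is nontrivial and the patching constructions of Harbater and Raynaud underlying Theorem~\ref{AC} build covers whose inertia over a chosen branch point contains a prescribed nontrivial $p$-group. This step is genuinely needed: if $p\nmid|G|$ the cover is tame and its genus is rigidly determined by the orders of the inertia groups, so no such growth would be possible. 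Granting the deformation below, the conclusion is then immediate from Riemann--Hurwitz: writing $2g_Y-2=|G|(2g_X-2)+\mathrm{Ram}$ with $\mathrm{Ram}=\sum_{Q}\sum_{i=0}^{\infty}(|I_i(Q)|-1)$, each additional nonzero term $|I_i(Q_0)|-1\ge p-1$ pushes $\mathrm{Ram}$, and hence $g_Y$, past any bound. Thus it suffices to make the filtration at $Q_0$ as long as we wish.

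The heart of the argument, and the main obstacle, is the deformation step: lengthening the ramification filtration at $Q_0$ while keeping the Galois group equal to $G$ and the branch locus inside $B$. I would work over $k[[t]]$ and use formal patching. Locally at $Q_0$ the cover is an $I_{Q_0}$-Galois cover of a formal disk, and the local model of Equation~\ref{Eartsch} shows how to deform it: replacing the defining data by a $t$-dependent family (morally, $f\mapsto f+t\,g$ with $g$ of larger pole order) raises the conductor on the generic fibre while specializing to $\phi_0$ at $t=0$. One then patches this deformed local cover to the constant deformation of $\phi_0$ over $X-\{b_0\}$ along the boundary annulus, producing a $G$-Galois cover over $k[[t]]$ whose special fibre is $\phi_0$ and whose generic fibre $\phi\colon Y\to X$ has a strictly longer filtration at the point over $b_0$. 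The delicate points are ensuring the local deformation is compatible with the action of all of $G$ (not merely of the inertia group $I_{Q_0}$) so that the patched cover stays connected with Galois group exactly $G$, verifying that the deformation genuinely raises a break in the filtration rather than merely permuting the jumps, and controlling the branch locus so that no new branch points appear outside $B$. Once the local deformation is understood, choosing $g$ of sufficiently high pole order (or iterating) makes the conductor at $b_0$ exceed any prescribed value, and the Riemann--Hurwitz computation above then forces $g_Y>N$.
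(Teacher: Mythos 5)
Your proposal follows essentially the same route as the paper: the paper's entire justification for this cited result is that a cover with a wildly ramified point can always be deformed via formal patching to lengthen the filtration of higher ramification groups there, which increases the degree of the different and hence, by Riemann--Hurwitz, pushes the genus past any bound --- exactly the mechanism (and the same Artin--Schreier toy model) you describe. One minor simplification: the existence of a $G$-Galois cover with branch locus in $B$ is immediate from the hypothesis that $G$ is a finite quotient of $\pi_1(X-B)$, so you need not invoke Theorem~\ref{AC} for that step.
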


An open problem is to determine the smallest genus that can occur for a 
$G$-Galois cover of $X$ with branch locus in $B$.
Because of results like Proposition \ref{Psupsing}, the smallest genus will often depend on the subset $B$, 
not just on its cardinality.

\begin{question} \label{Qgenus}
Given an affine $k$-curve $X-B$ and a finite quotient $G$ of $\pi_1(X-B)$, what is the smallest positive 
integer $g=g(X,B,G)$ which occurs as the genus of $Y$ for a $G$-Galois cover $\phi:Y \to X$ with branch locus in $B$?
\end{question}

A crucial case is to understand Galois covers of the affine line.
By Abhyankar's Conjecture, there exists a $G$-Galois cover of the affine line
if and only if $G$ is quasi-$p$, which means that $G$ is generated by $p$-groups.
For the affine line, if $G$ is an abelian $p$-group, then 
the answer to Question \ref{Qgenus} can be determined by class field theory.   
There are many other quasi-$p$ groups, including all simple groups with order divisible by $p$.
When $G$ is the projective special linear group ${\rm PSL}_2(\FF_p)$, then the answer to 
Question \ref{Qgenus} is $(p-1)^2/4$, \cite{BW}.
Under certain group theoretic conditions, an upper bound for the minimal genus can be found in 
\cite[Thm.\ 3.5]{Pr:jump1}.

One example of a quasi-$p$ group is a non-abelian semi-direct product $G$ of the form
$(\ZZ/\ell)^a \rtimes \ZZ/p$ where $\ell$ and $p$ are distinct primes and $a$ is the order of $\ell$ modulo $p$.
In a group project supervised by the authors at the WIN conference in Banff, November 2008, 
the group calculated the minimal genus that can occur for a Galois cover of the affine line with this group $G$.  
Specifically, in \cite[Thm.\ 4.1]{Banff}, the group proved that there is a 
$\Ga$-Galois cover $Z \to \PP_k$ branched only at $\infty$ with genus $g_Z=1+\ell^a(p-3)/2$
if $p$ is odd.
In addition, the group proved that this is the minimal genus and that 
there are only finitely many curves of this minimal genus 
which are Galois covers of the affine line with this Galois group.  
For the proof, the group determined the action of an automorphism of order $p$ on $J_Y[\ell]$ where 
$Y$ is the Artin-Schreier curve $y^p-y=x^d$.
This gave insight into the unramified elementary abelian $\ell$-group covers of $Y$ that are Galois over $\PP_k$.
We now extend this result to a more general class of quasi-$p$ groups.

For a finite group $G$, let $\Phi(G)$ denote the Frattini subgroup of $G$ 
(the intersection of all proper maximal subgroups of $G$).  This is the set of ``non-generators" of $G$.
If $\ell$ is a prime and $L$ is an $\ell$-group then $\Phi(L) = L^{\ell}[L,L]$ and $\mathcal{L}=L/\Phi (L)$ is an elementary abelian $\ell$-group.  
We will need the following lemma.

\begin{lemma} \label{Lsemidirect}
Let $\ell$ and $p$ be distinct primes and let $a$ be the order of $\ell$ modulo $p$.
There is a unique non-abelian semi-direct product of the form $(\ZZ/\ell)^a \rtimes \ZZ/p$ up to isomorphism.
\end{lemma}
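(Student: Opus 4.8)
The plan is to reduce the entire classification to linear algebra and representation theory over $\FF_\ell$. A semi-direct product of the stated form is the datum of a homomorphism $\phi:\ZZ/p \to \Aut((\ZZ/\ell)^a) = {\rm GL}_a(\FF_\ell)$, and it is non-abelian precisely when $\phi$ is non-trivial, i.e.\ when $\phi$ sends a generator of $\ZZ/p$ to an element $M$ of order exactly $p$. First I would pin down when two such products are abstractly isomorphic. In any non-abelian $G=(\ZZ/\ell)^a \rtimes \ZZ/p$, the subgroup $V=(\ZZ/\ell)^a$ is the unique Sylow $\ell$-subgroup, since $[G:V]=p$ is prime to $\ell$; hence $V$ is characteristic, and any isomorphism between two such groups must carry $V$ to $V$ and induce an automorphism of the quotient $\ZZ/p$. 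Consequently, two such products are isomorphic if and only if the corresponding homomorphisms $\phi,\psi$ coincide after conjugating by some $\beta\in{\rm GL}_a(\FF_\ell)$ and precomposing with some $\gamma\in\Aut(\ZZ/p)$.

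Next I would translate the conjugacy problem into module theory. Specifying $\phi$ is the same as endowing $V=\FF_\ell^a$ with the structure of an $\FF_\ell[\ZZ/p]$-module, and conjugacy of homomorphisms corresponds exactly to isomorphism of these modules. Since $p\neq\ell$, Maschke's theorem makes $\FF_\ell[\ZZ/p]\cong\FF_\ell[x]/(x^p-1)$ semisimple, and its simple modules are read off from the factorization $x^p-1=(x-1)\prod_i f_i(x)$ into irreducibles over $\FF_\ell$. Here the hypothesis on $a$ enters decisively: because $a$ is the order of $\ell$ modulo $p$, it is the common degree of all irreducible factors $f_i$ of the cyclotomic polynomial $\Phi_p$. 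Indeed, these factors correspond to the orbits of the Frobenius $\zeta\mapsto\zeta^\ell$ acting on the primitive $p$-th roots of unity $\mu_p\setminus\{1\}$, and each such orbit has size $a$. Thus the non-trivial simple $\FF_\ell[\ZZ/p]$-modules all have dimension exactly $a$, and there are $(p-1)/a$ of them.

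The decisive observation is then a dimension count. Since $\dim_{\FF_\ell}V=a$ equals the dimension of each non-trivial simple module, a non-trivial module structure on $V$ can contain neither the trivial summand nor more than one simple factor; it must \emph{be} a single non-trivial simple module. Hence the non-trivial $\phi$ fall into exactly $(p-1)/a$ conjugacy classes, indexed by the factors $f_i$, equivalently by the Frobenius orbits $O_i\subset\mu_p\setminus\{1\}$. To finish I would show that $\Aut(\ZZ/p)=(\ZZ/p)^*$ permutes these classes transitively. Precomposing $\phi$ with $\gamma_k:c\mapsto c^k$ replaces $M$ by $M^k$, whose eigenvalues are the $k$-th powers of those of $M$, so it sends the orbit $O_i$ to $kO_i$. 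Identifying $\mu_p\setminus\{1\}$ with $(\ZZ/p)^*$, the Frobenius orbits are exactly the cosets of the subgroup $\langle\ell\rangle$ of order $a$, and multiplication by $k$ permutes these cosets transitively. Combining the two equivalences collapses all non-trivial $\phi$ into one isomorphism class, giving uniqueness; existence follows because $p\mid\ell^a-1$ produces an element of order $p$ in ${\rm GL}_a(\FF_\ell)$ (multiplication by a primitive $p$-th root of unity on $\FF_{\ell^a}\cong\FF_\ell^a$).

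The main obstacle is the bookkeeping of the second and third steps: correctly matching the simple $\FF_\ell[\ZZ/p]$-modules with the Frobenius orbits on $\mu_p$, verifying their common degree $a$, and then checking that the $\Aut(\ZZ/p)$-action and the Frobenius-action on $(\ZZ/p)^*$ interact to yield transitivity on cosets. Once this module-theoretic dictionary and the transitivity are in place, the uniqueness is an immediate consequence of the dimension count.
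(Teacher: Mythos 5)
Your proposal is correct and follows essentially the same route as the paper's own proof: both reduce the classification to non-trivial homomorphisms $\ZZ/p \to {\rm GL}_a(\ZZ/\ell)$ considered up to conjugation and change of generator of $\ZZ/p$, both use the hypothesis $a = {\rm ord}_p(\ell)$ to force the action on $(\ZZ/\ell)^a$ to be irreducible (the paper via a single rational canonical block, you via Maschke plus a dimension count), and both use the generator-change freedom to move among the $(p-1)/a$ irreducible degree-$a$ factors of $\Phi_p(x)$. The only substantive additions in your write-up are the explicit justification that the Sylow $\ell$-subgroup is characteristic (which the paper leaves implicit) and the existence argument via an order-$p$ element of $\FF_{\ell^a}^*$, which the paper's proof omits.
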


\begin{proof}
Let $G$ be a non-abelian semi-direct product of the form $(\ZZ/\ell)^a \rtimes \ZZ/p$.
Then $G$ is determined by a non-trivial homomorphism $\gamma:\ZZ/p \to \Aut((\ZZ/\ell)^a)$.
The isomorphism type of $G$ depends only on ${\rm Im}(\gamma)$ because of the flexibility of choice of a generator for $\ZZ/p$.
Furthermore, it depends only on the conjugacy class of ${\rm Im}(\gamma)$ because of the choice of basis for $(\ZZ/\ell)^a$.
Thus, to show that $G$ is unique up to isomorphism, it suffices to show that all subgroups of order $p$
in $\Aut((\ZZ/\ell)^a) \simeq {\rm GL}_a(\ZZ/\ell)$ are conjugate. 
Let $H \subset {\rm GL}_a(\ZZ/\ell)$ be a subgroup of order $p$ and let $h \in H$ be a generator.
Up to conjugacy, $h$ can be chosen in rational canonical form.
Since $a$ is the order of $\ell$ modulo $p$, the vector space $(\ZZ/\ell)^a$ is indecomposable under the
semi-direct product action.
The matrix $h$ consists of one block since the action is indecomposable.
Thus $h$ is determined by its characteristic polynomial $f_h(x)$. Then $f_h(x)$ is an irreducible (degree $a$) factor of the cyclotomic polynomial $\Phi_p(x)$. After possibly changing the generator $h \in H$, then $f_h(x)$ is the minimal polynomial for a fixed $p$th root
of unity $\zeta_p$.  Thus the conjugacy class of $H$ is uniquely determined.
\end{proof}

Here is the answer to Question \ref{Qgenus} for groups of the form $L \rtimes \ZZ/p$ where 
$L$ is an $\ell$-group whose maximal elementary abelian quotient is $(\ZZ/\ell)^a$.

\begin{proposition} \label{Pgenus}
Let $\ell$ and $p$ be distinct primes with $p$ odd.
Suppose $L$ is an $\ell$-group such that the quotient $L/\Phi(L)$ is elementary abelian of rank $a = ord_p(\ell)$.
Suppose $\Gamma$ is a quasi-$p$ group which is a semi-direct product of the form $L \rtimes \ZZ/p$.
Then there exists a $\Gamma$-Galois cover $W \to \PP_k$ branched only at $\infty$ such that the genus of $W$ is $g_W=1+|L|(p-3)/2$.
This is the minimal genus that occurs for a $\Gamma$-Galois cover of $\PP_k$ branched only at $\infty$.
\end{proposition}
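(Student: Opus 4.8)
The plan is to split the statement into a lower bound (minimality) and a matching construction (existence), both organized around the factorization of a $\Gamma$-Galois cover through its $\ZZ/p$-quotient. Since $L$ is normal in $\Gamma$ with $\Gamma/L \simeq \ZZ/p$, any $\Gamma$-Galois cover $\phi:W \to \PP_k$ branched only at $\infty$ factors as $W \to Y \to \PP_k$, where $Y \to \PP_k$ is the $\ZZ/p$-Galois quotient cover and $W \to Y$ is $L$-Galois. The cover $Y \to \PP_k$ is a $\ZZ/p$-cover branched only at $\infty$, hence (after a coordinate change) an Artin--Schreier cover $y^p-y=f(x)$ with a single ramification jump $d=\deg f$, $p \nmid d$, and $g_Y=(p-1)(d-1)/2$ by Section \ref{inertia}. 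Since $|L|$ is prime to $p$, the cover $W \to Y$ is tame, and it is branched only over the unique point $Q_\infty$ above $\infty$.

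For minimality I would apply Riemann--Hurwitz to $W \to Y$: writing the ramification term as a nonnegative sum over points above $Q_\infty$, one gets $g_W = 1 + |L|(g_Y-1) + \frac{1}{2}\sum_{R \mid Q_\infty}(e_R-1) \ge 1+|L|(g_Y-1)$. The key point is that $g_Y \ge 1$: if $g_Y=0$ then $Y \simeq \PP_k$ and $W \to Y$ would be a nontrivial prime-to-$p$ cover of $\PP_k$ branched over the single point $Q_\infty$, which is impossible because $\pi_1^{p'}(\Aa_k)$ is trivial (here $L \ne 1$ since $L/\Phi(L)=(\ZZ/\ell)^a$ with $a \ge 1$). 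Hence $d \ge 2$, so $g_Y \ge (p-1)/2$ and $g_W \ge 1+|L|(p-3)/2$, with equality forcing $d=2$ and $W \to Y$ unramified.

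For existence I would fix $Y: y^p-y=x^2$, with $g_Y=(p-1)/2$ and order-$p$ automorphism $\tau:y \mapsto y+1$ generating $\Gal(Y/\PP_k)$, and construct an unramified $L$-Galois cover $W \to Y$ whose $L$-structure is compatible with $\tau$ so that $W \to \PP_k$ is $\Gamma$-Galois; Riemann--Hurwitz then gives exactly $g_W=1+|L|(p-3)/2$. Such a cover corresponds to a surjection $\rho:\pi_1(Y) \twoheadrightarrow L$ that is equivariant for the $\tau$-action on $\pi_1(Y)$ and the conjugation action $\phi$ of $\ZZ/p$ on $L$ coming from the semidirect product. At the Frattini level this asks for a $\ZZ/p$-equivariant surjection $J_Y[\ell] \twoheadrightarrow L/\Phi(L)=(\ZZ/\ell)^a$; since $Y \to \PP_k$ has quotient $\PP_k$ of genus $0$, the module $J_Y[\ell] \cong H^1(Y,\FF_\ell)$ has no trivial $\ZZ/p$-summand, so (being nonzero as $g_Y \ge 1$) it contains a nontrivial irreducible $\FF_\ell[\ZZ/p]$-summand, which has dimension $\mathrm{ord}_p(\ell)=a$. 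By Lemma \ref{Lsemidirect} any such summand realizes the required action, so the Frattini-level surjection exists; this recovers the elementary abelian case of \cite{Banff}.

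The main obstacle is lifting this Frattini-level surjection to a surjection onto the full (possibly non-abelian) $\ell$-group $L$ while keeping $W \to Y$ unramified --- equivalently, solving a $\ZZ/p$-equivariant embedding problem for $\pi_1(Y)$ with prime-to-$p$ kernel $\Phi(L)$ without introducing new branch points. The generic embedding-problem machinery following Theorem \ref{Tfree} does produce a solution, but with extra branching precisely because the kernel is prime-to-$p$ rather than quasi-$p$; avoiding this forces one to use the arithmetic of $Y$ rather than abstract group theory. I would build up $L$ along its characteristic (hence $\ZZ/p$-stable) Frattini filtration, reducing to successive central extensions with elementary abelian $\FF_\ell[\ZZ/p]$-kernels over the intermediate $(L/\Phi(L))$-covers $Y'$ of $Y$. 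As these intermediate curves have large genus, $J_{Y'}[\ell]$ acquires each nontrivial irreducible with high multiplicity, giving ample room to solve each step; the obstruction at each stage lies in an equivariant $H^2$ with values in a nontrivial $\FF_\ell[\ZZ/p]$-module, whose relevant invariants vanish because the base of the tower is $\PP_k$. Verifying this vanishing, and checking that the successive lifts can be chosen to remain everywhere unramified over $Y$, is the technical heart that extends \cite{Banff} from $(\ZZ/\ell)^a$ to an arbitrary $\ell$-group $L$.
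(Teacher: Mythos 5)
Your minimality half is correct and is essentially the argument the paper compresses into its last sentence: factor through the $\ZZ/p$-quotient $Y$, note that $W \to Y$ is tame and branched only over the point above $\infty$, rule out $g_Y=0$ via the triviality of $\pi_1^{p'}(\Aa_k)$, and apply Riemann--Hurwitz. The genuine gap is in the existence half, and it sits exactly at the point that is new in Proposition \ref{Pgenus} relative to \cite[Thm.\ 4.1]{Banff}: your construction is complete only for $L=(\ZZ/\ell)^a$ (the Frattini level, which reproves the Banff result), and for general $L$ you give a plan --- induct along the Frattini series of $L$, solving at each stage a $\ZZ/p$-equivariant embedding problem with elementary abelian $\ell$-kernel while keeping the tower unramified over $Y$ --- whose key step you yourself flag as unverified. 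That step does not follow from anything you say. The claimed vanishing of the obstructions ``because the base of the tower is $\PP_k$'' has no visible mechanism: if you pose the embedding problems for the fundamental groups of the \emph{projective} curves in the tower (which is what encodes unramifiedness), these groups have $\ell$-cohomological dimension $2$, not $1$, so obstruction groups do not vanish for dimension reasons; if instead you pose them for $\pi_1(\Aa_k)$, which does have cohomological dimension $\leq 1$ (and where weak solutions are automatically proper here, since each step is a Frattini cover), then solutions exist --- this is just Theorem \ref{Tserre} --- but nothing forces them to remain unramified over $Y$: the wild part of the ramification filtration at $\infty$ can grow, and with it the genus. So the technical heart is genuinely missing, not merely routine.

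The paper closes this gap by a mechanism that avoids obstruction theory altogether, and it is worth internalizing. Let $\CL$ be the universal Frattini cover of $(\ZZ/\ell)^a$, a \emph{free} pro-$\ell$ group of rank $a$, carrying a $\ZZ/p$-action induced from $H=(\ZZ/\ell)^a \rtimes \ZZ/p$ \cite[Prop.\ 22.12.2]{FJ}; then $\Gamma = L \rtimes \ZZ/p$ is a quotient of $\CL \rtimes \ZZ/p$ by a $\ZZ/p$-invariant normal subgroup $N \subset \CL$. Starting from the Banff surjection $\psi_1:\pi_1(Y_2) \twoheadrightarrow (\ZZ/\ell)^a$, the paper uses Grothendieck's description of the prime-to-$p$ fundamental group of the projective curve $Y_2$ (Theorem \ref{GR}, with $a \leq p-1 = 2g_{Y_2}$) to produce a surjection $\psi_2:\pi_1(Y_2) \twoheadrightarrow \CL$ dominating $\psi_1$; no $H^2$ computation is needed because $\pi_1^{p'}(Y_2)$ is an explicitly known finitely generated profinite group. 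The resulting infinite unramified $\CL$-extension $F/k(Y_2)$ is then shown to be Galois over $k(x)$ by a Galois-closure/universality argument (the closure is pro-$\ell$ over $k(Y_2)$ and still surjects onto $(\ZZ/\ell)^a$, and $\CL$ is universal for such groups, so $F'=F$), and Schur--Zassenhaus identifies $\Gal(F/k(x)) \simeq \CL \rtimes \ZZ/p$. Composing $\pi_1(\Aa_k) \twoheadrightarrow \CL \rtimes \ZZ/p$ with the quotient $\CL \rtimes \ZZ/p \twoheadrightarrow \Gamma$ gives a $\Gamma$-cover $W \to \PP_k$ branched only at $\infty$ with $W \to Y_2$ unramified, and Riemann--Hurwitz yields $g_W = 1+|L|(p-3)/2$. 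In short: where you try to build $L$ up from $(\ZZ/\ell)^a$ one central step at a time inside the tower, the paper realizes the whole free pro-$\ell$ group over $Y_2$ at once and only then passes to quotients; that single idea is what makes both the $\ZZ/p$-compatibility and the unramifiedness --- the two things your induction cannot control --- come for free.
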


Before proving Proposition \ref{Pgenus}, we need some information about Frattini covers.
A surjective group homomorphism $\phi: G \twoheadrightarrow H$  
is a {\it Frattini cover} if $\ker(\phi) \subset \Phi(G)$.
For each finite (even profinite) group $H$, there exists a cover $\tilde{\phi}:{\mathcal H} \to H$, unique up to isomorphism, such that $\tilde{\phi}$ is the largest Frattini cover of $H$.  The group $\mathcal H$ is the {\it universal Frattini cover} of $H$  (see \cite[Chapter 20, sections 6 and 7]{FJ} or \cite[22.11 and 22.12]{FJ2} for definitions and details). 
A group $N$ is a normal subgroup of $\mathcal H$ if and only if it is a Frattini cover of $H$.

The universal Frattini cover of $H$ is in fact the smallest cover of $H$ that is projective.
In other words, every embedding problem $(\tilde{\phi}: {\mathcal H} \twoheadrightarrow H, \alpha: G \twoheadrightarrow H)$ has a {\sl weak} solution $\lambda$.  
When $\alpha$ is a Frattini cover, then $\lambda$ is automatically a proper solution (i.e., surjective).  

\begin{proof}
The group $\Gamma$ has a quotient $H$ which is a semi-direct product $(\ZZ/\ell)^a \rtimes \ZZ/p$.
Since $\Gamma$ is quasi-$p$, the group $H$ is non-abelian.  
By Lemma \ref{Lsemidirect}, the structure of $H$ is uniquely determined up to isomorphism.

Let $\CL$ be the universal Frattini cover of $(\ZZ/\ell\ZZ)^a$. 
This is a free pro-$\ell$ group of rank $a$.
Because $\CL/\Phi(\CL)= (\ZZ/\ell\ZZ)^a$ and $\Phi(\CL)$ is the set of non-generators of $\CL$, 
the infinite group $\CL$ can be generated by $a$ elements.

The semi-direct product $H$ is determined by an action of $\ZZ/p$ on $(\ZZ/\ell\ZZ)^a$.
This induces an action of $\ZZ/p$ on $\CL$ \cite[Prop. 22.12.2]{FJ}.
Let $\CL \rtimes \ZZ/p$ be the resulting semi-direct product.  
Then $\CL \rtimes \ZZ/p$ is the universal Frattini $\ell$-cover of $\Ga$
and $\Gamma$ is a quotient of $\CL \rtimes \ZZ/p$.
That is, there exists a normal subgroup $N$ of $\CL$ that is 
$\ZZ/p$-invariant with $(\CL/N) \rtimes \ZZ/p = \Gamma$.

By \cite[Thm.\ 4.1]{Banff}, there is a $\Ga$-Galois cover $Z \to \PP_k$ branched only at $\infty$.
Furthermore, it factors through the Artin-Schreier cover $\phi:Y_2 \to \PP_k$ with equation $y^p-y=x^2$.
Also the $(\ZZ/\ell\ZZ)^a$-Galois cover $Z \to Y_2$ is unramified.
This yields a surjection $\psi_1: \pi_1(Y_2) \twoheadrightarrow (\ZZ/\ell\ZZ)^a$.
Since $\ell$ is prime-to-$p$ and $a = \mathrm{ord}_p(\ell) \leq p-1 =2g(Y_2)$, 
by \cite[Cor.\ 2.12]{Gr} there exists a surjection $\psi_2: \pi_{1}(Y_2)  \twoheadrightarrow \CL$ that dominates $\psi_1$.
This induces an infinite unramified $\CL$-Galois extension $F$ of the function field $k(Y_2)$ of $Y_2$.

As $k(Y_2)$ is a $\ZZ/p$-Galois extension of $k(x) = k(\PP_k)$ branched only at $\infty$, 
the extension $F/k(x)$ is algebraic and branched only at $\infty$.
Let $F'$ be the Galois closure of $F/k(x)$.  
Then $F'/k(Y_2)$ is a Galois extension with pro-$l$ Galois group that surjects onto $\CL$ and thus also onto $(\ZZ/\ell\ZZ)^a$.
But $\CL$ is universal for all pro-$l$ groups surjecting onto $(\ZZ/\ell\ZZ)^a$ \cite[Remark 22.11.19]{FJ2}   so $F'=F$ and the extension $F/k(x)$ is Galois. 
By Schur-Zassenhaus the Galois group is $\CL \rtimes \ZZ/p$.
Thus there is a surjection $\psi_2':\pi_1(\Aa_k) \to \CL \rtimes \ZZ/p$.

Taking the composition of $\psi_2'$ with the natural surjection $\CL \rtimes \ZZ/p \to \Gamma$, 
this yields a surjection $\lambda: \pi_1(\Aa_k) \to L \rtimes \ZZ/p$. 
This induces an unramified $\Gamma$-Galois cover $W \to \PP_k$ branched only at $\infty$ and 
dominating $\phi$.  Moreover, the cover $W \to Y_2$ is unramified.

By the Riemann-Hurwitz formula, the genus of $W$ is $1+|L|(p-3)/2$.  
The statement that this is the minimal genus follows just as in \cite[Thm. 4.1]{Banff}, 
since the minimal genus will be realized when the $L$-Galois subcover is unramified and the genus of the 
$\ZZ/p$-Galois quotient is the smallest positive number possible.
\end{proof}

\subsection{An open question on arithmetic invariants of Galois covers}

As discussed in Section \ref{JacCov}, there is a connection between unramified $\ZZ/p$-Galois 
covers of a projective curve and the $p$-torsion of its Jacobian.  
As a result (see Theorem \ref{Ttama}),
the fundamental group $\pi_1(X-B)$ will depend on the $p$-rank $s_Y$ when $\phi:Y \to X$ is a 
Galois cover with branch locus in $B$.  
For this reason, there is good motivation to understand the values that occur for the $p$-rank 
associated with covers.
Even for the case when $G$ is cyclic and $X=\PP_k$, 
there are many papers on this subject, e.g., \cite{Bo01}, \cite{Y}.  

There are arithmetic invariants of the Jacobian of a $k$-curve other than its $p$-rank, including 
the Newton polygon and the $p$-torsion group scheme (see \cite{demazure} and \cite{O:strat} respectively).
As an example, recall that an elliptic $k$-curve $E$ can be either ordinary or supersingular.
The two cases can be distinguished by the number of points in $E[p](k)$, which is either $p$ or $1$.
If $E$ is ordinary, then its Newton polygon has slopes $0$ and $1$.  The $p$-torsion group scheme
of an ordinary elliptic curve is $E[p] \simeq \ZZ/p \oplus \mu_p$
where $\mu_p$ is the kernel of Frobenius on $\GG_m$.
If $E$ is supersingular, then its Newton polygon has slopes $1/2$.
The $p$-torsion group scheme of a supersingular elliptic curve fits into a (non-split) short
exact sequence $1 \to \alpha_p \to E[p] \to \alpha_p \to 1$ where  
$\alpha_p$ is the kernel of Frobenius on $\GG_a$. 
Let $E_{ss}[p]$ denote the (unique) isomorphism class of the $p$-torsion group scheme 
of a supersingular elliptic curve.

While the connection between these other invariants and the fundamental group is not clear, it still raises the 
following question.

\begin{question} \label{Qprank}
Given a finite group $G$ which is a quotient of $\pi_1(X-B)$, what are the possibilities for 
the $p$-rank, Newton polygon, and $p$-torsion group scheme of $J_Y$ for $G$-Galois covers $\phi:Y \to X$
with branch locus in $B$?
\end{question}

Here is a new result about this question, building upon the group result in \cite[Thm.\ 4.1]{Banff}.
We find a Galois cover $Z \to \PP_k$ branched only at $\infty$ 
with Galois group $\Ga$ such that $Z$ has small genus and large $p$-rank.

\begin{proposition} \label{Pprank}
Let $\ell$ and $p$ be distinct primes with $p$ odd and $\ell \geq -1+ (p-1)^2/2$.
Let $a$ be the order of $\ell$ modulo $p$.
Suppose $G$ is the non-abelian semi-direct product $(\ZZ/\ell)^a \rtimes \ZZ/p$. 
Then there exists a Galois cover $Z \to \PP$ branched only at $\infty$, with Galois group $G$, genus
$g_Z=1+\ell^a(p-3)/2$ and $p$-rank $s_Z=(\ell^a-1)(p-3)/2$.
Furthermore, $J_Z[p]$ decomposes completely into $s_Z$ copies of $\ZZ/p \oplus \mu_p$ and $(p-1)/2$ copies 
of $E_{ss}[p]$, the $p$-torsion group scheme of a supersingular elliptic curve.
In particular, the Newton polygon of $J_Z$ only has slopes $0$, $1/2$, and $1$.
\end{proposition}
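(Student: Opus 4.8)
The plan is to build $Z$ as a tower $Z \to Y_2 \to \PP$ exactly as in Proposition \ref{Pgenus}, where $Y_2$ is the Artin--Schreier curve $y^p-y=x^2$ of genus $(p-1)/2$ and $Z \to Y_2$ is the unramified $(\ZZ/\ell)^a$-cover attached to a $\tau$-stable subspace $M \subseteq J_{Y_2}[\ell]$; here $\tau$ is the order-$p$ automorphism $y \mapsto y+1$ of $Y_2$. Since $J_{Y_2}$ has no $\tau$-invariants (its quotient by $\tau$ is $\PP$), the $\FF_\ell[\ZZ/p]$-module $J_{Y_2}[\ell] \cong (\ZZ/\ell)^{p-1}$ is a sum of nontrivial irreducibles, each of dimension $a = \mathrm{ord}_p(\ell)$. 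Choosing $M$ to be one such irreducible submodule makes $Z \to \PP$ Galois with group $M \rtimes \ZZ/p$, which by Lemma \ref{Lsemidirect} is the required group $G$; the genus $g_Z = 1+\ell^a(p-3)/2$ then follows from Riemann--Hurwitz as in Proposition \ref{Pgenus}. The point of the present proposition is to pin down $M$ so that the arithmetic invariants come out as stated.

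The key structural input is that, because $L := (\ZZ/\ell)^a$ has order prime to $p$ and acts on $Z$ with $Z/L = Y_2$, the group scheme $J_Z[p]$ splits as the direct sum of its $L$-isotypic pieces, equivalently
$$H^1(Z,\CO_Z) = \bigoplus_{\chi \in \hat{L}} H^1(Y_2, \CL_\chi),$$
compatibly with Frobenius, where $\CL_\chi$ is the $\ell$-torsion line bundle on $Y_2$ attached to $\chi$. The trivial character contributes $J_{Y_2}[p]$, while the remaining $\ell^a-1$ characters assemble the $p$-torsion of the Prym variety $\mathrm{Prym}(Z/Y_2)$, of dimension $(\ell^a-1)(p-3)/2$. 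Thus the proposition reduces to two claims: (i) $J_{Y_2}$ is superspecial, so $J_{Y_2}[p] \cong E_{ss}[p]^{(p-1)/2}$; and (ii) $M$ can be chosen so that $\mathrm{Prym}(Z/Y_2)$ is ordinary, so its $p$-torsion is $(\ZZ/p \oplus \mu_p)^{(\ell^a-1)(p-3)/2}$. Granting both, the displayed decomposition of $J_Z[p]$, the value $s_Z = (\ell^a-1)(p-3)/2$, and the slopes $0,1/2,1$ all follow at once. For (i) the Deuring--Shafarevich formula \cite[Cor.\ 1.8]{DS} gives that $Y_2$ has $p$-rank $0$, and a direct computation of the Cartier operator on the holomorphic differentials of $Y_2$ shows it vanishes identically, which is precisely superspeciality.

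The main obstacle is claim (ii). Non-ordinariness of the twist by $P$ is the vanishing of the determinant of the (twisted) Hasse--Witt matrix acting on $H^1(Y_2,\CL_P)$, a condition that cuts out a $\tau$-stable closed subset $\Theta_{\mathrm{bad}} \subset J_{Y_2}$. I would bound the number of $\ell$-torsion points lying on $\Theta_{\mathrm{bad}}$ in terms of its degree and compare this with the number $\ell^a-1$ of nonzero points of a candidate $M$; the hypothesis $\ell \geq -1+(p-1)^2/2$ is exactly what is needed to force some irreducible $\tau$-submodule $M \subseteq J_{Y_2}[\ell]$ to have all of its nonzero points off $\Theta_{\mathrm{bad}}$, so that $\mathrm{Prym}(Z/Y_2)$ is ordinary. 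The delicate points are obtaining an effective degree bound for $\Theta_{\mathrm{bad}}$ that matches the stated threshold on $\ell$, and checking that bijectivity of Frobenius on each $H^1(Y_2,\CL_\chi)$ (together with the pairing of $\chi$ with $\bar\chi$) forces the $p$-torsion of the Prym to be literally $(\ZZ/p \oplus \mu_p)^{(\ell^a-1)(p-3)/2}$, and not merely a group scheme with the correct Newton slopes. Assembling the trivial and nontrivial isotypic contributions then yields the decomposition of $J_Z[p]$, the value of $s_Z$, and the Newton polygon.
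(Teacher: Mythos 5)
Your skeleton matches the paper's proof almost step for step: the same base curve $Y_2\colon y^p-y=x^2$, superspeciality of $J_{Y_2}$ handling the old part (the paper quotes \cite[Cor.\ 3.3]{Pr}; your direct Cartier-operator computation would be an acceptable substitute), the prime-to-$p$ isotypic splitting of $J_Z[p]$, and the reduction to making the new (Prym) part ordinary. Your group-theoretic step --- taking $M$ to be an irreducible $\FF_\ell[\tau]$-submodule of $J_{Y_2}[\ell]$, so that the associated unramified cover is Galois over $\PP_k$ with group $M\rtimes \ZZ/p\cong G$ by Lemma \ref{Lsemidirect} --- is correct, and is if anything cleaner than the paper's passage through a Galois closure. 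The gap is entirely in your claim (ii), and it is exactly the step that carries all of the difficulty. The statement you defer --- ``bound the number of $\ell$-torsion points on $\Theta_{\mathrm{bad}}$ in terms of its degree'' so that the hypothesis $\ell\geq -1+(p-1)^2/2$ suffices --- is not a routine verification: it is a theorem of Raynaud on the theta divisor of the bundle $B=F_*\CO_{Y_2}/\CO_{Y_2}$, and the paper's proof at this point simply cites it (\cite[4.3.1]{ray1982}; the hypothesis $\ell+1\geq (p-1)^2/2$ is precisely Raynaud's bound $\ell+1\geq (p-1)g_{Y_2}$). You have in effect deferred the entire theorem to a remark.

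Worse, the counting scheme you sketch cannot work even granting a Raynaud-type bound. Since the norm map to $J_{\PP}=0$ forces $\Phi_p(\tau)=0$ on $J_{Y_2}$ (here $\Phi_p$ is the $p$-th cyclotomic polynomial) and $2\,{\rm dim}\,J_{Y_2}=p-1$, the Tate module is free of rank one over $\ZZ_\ell\otimes\ZZ[\zeta_p]$, so $J_{Y_2}[\ell]$ is free of rank one over $\FF_\ell[\tau]/(\Phi_p(\tau))\cong\prod \FF_{\ell^a}$: it is multiplicity-free, and there are exactly $(p-1)/a$ irreducible $\tau$-submodules. Any realistic bound on the number of bad $\ell$-torsion points is of order $\ell^{2g_{Y_2}-2}$, which dwarfs $(p-1)/a$; so a cardinality bound can never prevent every one of these few candidate modules $M$ from containing a bad point, and no comparison of that count with the $\ell^a-1$ nonzero points of a single $M$ can certify a clean $M$. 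Raynaud's pigeonhole succeeds only because he searches among all $\sim\ell^{2g-1}$ cyclic subgroups of $J_{Y_2}[\ell]$, not among a bounded list of $\tau$-submodules. This is exactly why the paper proceeds differently after invoking \cite[4.3.1]{ray1982}: it takes the single clean cyclic cover $Z_1\to Y_2$ that Raynaud provides, forms the Galois closure of $Z_1\to\PP_k$, and uses that the $\sigma$-conjugates of $Z_1$ are isomorphic curves to control the invariants of the closure. To fix your argument you should either follow that route, or supply a genuinely $\tau$-equivariant refinement of the torsion-point count on $\Theta_{\mathrm{bad}}$ --- something that does not follow from degree considerations and is nowhere sketched in your proposal.
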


\begin{proof}
Consider the cover $\phi:Y \to \PP_k$ with affine equation $y^p-y=x^2$.  
Then $Y$ has genus $g_Y=(p-1)/2$ and $p$-rank $0$.
By \cite[Cor.\ 3.3]{Pr}, 
$J_Y$ is superspecial, i.e., $J_Y[p]$ decomposes into $g_Y$ copies of $E_{ss}[p]$.
In particular, $J_Y$ is supersingular, i.e., the slopes of its Newton polygon all equal 1/2. 

If $Z_1 \to Y$ is an unramified $\ZZ/\ell$-Galois cover, then $Z_1$ has genus $g_{Z_1}=1+\ell(p-3)/2$
by the Riemann-Hurwitz formula.  
Suppose $\ell \not = p$ is prime such that $\ell +1 \geq (p-1)^2/2$.
By \cite[4.3.1]{ray1982}, there exists an unramified $\ZZ/\ell$-Galois cover 
$Z_1 \to Y$ such that the new part of $J_{Z_1}$ is ordinary.
Thus $Z_1$ has $p$-rank $s_{Z_1}=(\ell-1)(p-3)/2$
and $J_{Z_1}[p]$ contains a factor isomorphic to $(\ZZ/p \oplus \mu_p)^{s_{Z_1}}$.

Also $J_Y$ is isogenous to a factor of $J_{Z_1}$. 
Since the cover $Z_1 \to Y$ has degree $\ell$, the degree of the isogeny is prime-to-$p$.
As a result, $J_{Z_1}[p]$ contains a factor isomorphic to $J_Y[p]$.
Thus $J_{Z_1}[p]$ decomposes into $s_{Z_1}$ copies of $(\ZZ/p \oplus \mu_p)$ and $(p-1)/2$ copies of $E_{ss}[p]$. 
In particular, the Newton polygon of $J_{Z_1}$ has slopes $0$, $1/2$, and $1$.

Consider the action of an automorphism $\sigma$ of $Y$ of order $p$ on the set of unramified 
cyclic $\ZZ/\ell$-Galois covers of $Y$.  If $Z_2$ is in the orbit of $Z_1$ under the action of $\sigma$, then 
$Z_2$ and $Z_1$ are isomorphic, and so every invariant of the curves is the same.
Consider the Galois closure $\psi: Z \to Y \to \PP$ of $Z_1 \to Y \to \PP$.
The Galois group of $\psi$ is isomorphic to $G$ since it is 
a semi-direct product of the form $(\ZZ/\ell)^a \rtimes \ZZ/p$, by Lemma \ref{Lsemidirect}.  
The genus is $g_Z=1+\ell^a(p-3)/2$ by the Riemann-Hurwitz formula.

Relative to the cover $Z \to Y$, the new part of $J_Z[p]$ is ordinary 
and the old part of $J_Z[p]$ is isomorphic to $Y[p]$.  
Thus the curve $Z$ has $p$-rank $s_Z=(\ell^a-1)(p-3)/2$ and
$J_Z[p]$ decomposes completely into $s_Z$ copies of $\ZZ/p \oplus \mu_p$ and $(p-1)/2$ copies of $E_{ss}[p]$.  
In particular, the Newton polygon of $J_Z$ has slopes $0$, $1/2$, and $1$.
\end{proof}

\bibliographystyle{plain}	% (uses file "plain.bst")
\bibliography{winsurvey}	

\end{document}